\newcommand{\Z}{\mathds{Z}}
\newcommand{\kk}{\mathds{k}}
\newcommand{\p}{\mathfrak{p}}
\newcommand{\q}{\mathfrak{q}}
\newcommand{\m}{\mathfrak{m}}
\newcommand{\ann}{\mathrm{Ann}}
\newcommand{\spec}{\mathrm{Spec}}
\newcommand{\A}{\mathds{A}}
\renewcommand{\P}{\mathds{P}}
\newcommand{\xto} {\xrightarrow}
\newcommand{\im}{\mathrm{Im}}
\newcommand{\Hom}{\mathrm{Hom}}
\newcommand{\coker}{\mathrm{coker}}
\renewcommand{\ker}{\mathrm{ker}}
\newtheorem{theorem}{Theorem}[section]
\newtheorem{lemma}[theorem]{Lemma}
\newtheorem{proposition}[theorem]{Proposition}
\newtheorem{corollary}[theorem]{Corollary}
\theoremstyle{definition}
\newtheorem{definition}[theorem]{Definition}
\newtheorem{example}[theorem]{Example}
\newtheorem{procedure}[theorem]{Procedure}
\theoremstyle{remark}
\newtheorem{remark}[theorem]{Remark}
\newtheorem{caution}[theorem]{\bf Caution}
\renewenvironment{proof}[1][Proof]{\begin{trivlist}
\item[\hskip \labelsep {\bfseries #1}]}{\end{trivlist}}
\numberwithin{equation}{section}
\numberwithin{equation}{section}
\begin{document}

\title{Computations over Local Rings in Macaulay2}
\author{Mahrud Sayrafi}
\dedicatory{Thesis Advisor: David Eisenbud}
\address{University of California, Berkeley}
%\curraddr{}
\email{mahrud@berkeley.edu}

\date{May 25, 2017}
\subjclass[2010]{Primary: 13P99. Secondary: 14Q99}
\keywords{Local Rings, Macaulay2, Minimal Free Resolutions}
%\thanks{}

\newcommand{\MM}{{\tt Macaulay2} }
\newcommand{\syz}{\text{Syz}}
\newcommand{\betti}{\beta}
\newcommand{\length}{\textrm{Length}}
\renewcommand{\b}{\bullet}
\renewcommand{\d}{\partial}

\begin{abstract}
 Local rings are ubiquitous in algebraic geometry. Not only are they naturally meaningful in a geometric sense, 
 but also they are extremely useful as many problems can be attacked by first reducing to the local case and taking 
 advantage of their nice properties. Any localization of a ring $R$, for instance, is flat over $R$. Similarly,
 when studying finitely generated modules over local rings, projectivity, flatness, and freeness are all equivalent.

 We introduce the packages {\tt PruneComplex}, {\tt Localization} and {\tt LocalRings} for {\tt Macaulay2}. 
 The first package consists of methods for pruning chain complexes over polynomial rings and their localization at 
 prime ideals. The second package contains the implementation of such local rings. Lastly, the third package 
 implements various computations for local rings, including syzygies, minimal free resolutions, length, minimal 
 generators and presentation, and the Hilbert--Samuel function.

 The main tools and procedures in this paper involve homological methods. In particular, many results depend on 
 computing the minimal free resolution of modules over local rings.
\end{abstract}

\maketitle

\tableofcontents

%%%%%%%%%%%%%%%%%%%%%%%%%%%%%%%%%%%%%%%%%%%%%%%%%%%%%%%%%%%%%%%%%%%%%%%%%%%%%%%%%%%%%%%%%%%%%%%%%%%%%%%%%%%%%%%%%%%%

\section{Introduction}
 Local rings were first defined by Wolfgang Krull as a Noetherian ring with only one maximal ideal. The name,
 originating from German "Stellenring," points to the fact that such rings often carry the geometric information of 
 a variety in the neighborhood of a point, hence local.
 %The same definition, however, can be extended to abstract or even analytic varieties.

 The procedures presented in this paper are described as pseudocodes and implemented in \MM, a computer algebra 
 software specializing in algebraic geometry and commutative algebra. In many situations, the power of \MM lies in 
 its algorithms for computing Gr\"obner bases using monomial orderings. In general, however, there are no known 
 monomial orders for local rings. The only exception to this is for localization with respect to a maximal ideal, 
 for which Mora's tangent cone algorithm provides a minimal basis suitable for computations. The main motivation for
 this work is studying local properties of algebraic varieties near irreducible components of higher dimension, such
 as the intersection multiplicity of higher dimensional varieties.

%%%%%%%%%%%%%%%%%%%%%%%%%%%%%%%%%%%%%%%%%%%%%%%%%%%%%%%%%%%%%%%%%%%%%%%%%%%%%%%%%%%%%%%%%%%%%%%%%%%%%%%%%%%%%%%%%%%%

\subsection{Definitions}
 We begin with basic definitions. A ring shall always mean a commutative ring with a unit.
% \FIXME should I declare noetherian right off the bath?

\begin{definition} 
  A ring $R$ is {\bf Noetherian} if any non-empty set of ideals of $R$ has maximal members.

  An $R$-module $M$ is {\bf Noetherian} if any non-empty set of submodules of $M$ has maximal members.
  \end{definition}

\begin{definition} 
  A ring $R$ is called a {\bf quasi-local ring} if it has only one maximal ideal.

  A Noetherian quasi-local ring $R$ is called a {\bf local ring}.
  \end{definition}

 There are many examples of local rings, as defined above, in the nature.
 For example, the ring $\kk[[x_1,x_2,\dots,x_n]]$ of formal power series with $n$ indeterminates is a (complete) 
 local ring where the maximal ideal is the ideal of all ring elements without a constant term.

\begin{definition} Let $R$ be a ring, $M$ an $R$-module, and $S\subset R$ a multiplicative closed set.

  The {\bf localization of $R$ at $S$} is the ring $R[S^{-1}] = S^{-1}R := \{(r,s) : r\in R, s\in S\}/\sim$, where 
  $(r,s)\sim(r',s')$ if and only if the is $t\in S$ such that $t(s'r-sr')=0$. The equivalence class of $(r,s)$ is 
  denoted $\tfrac rs$.
  Moreover, there is a canonical homomorphism $\varphi:R\to R[S^{-1}]$ given by $r\mapsto\tfrac r1$.
  This homomorphism induces a bijection between the set of prime ideals of $R[S^{-1}]$ and the set of prime ideals 
  of $R$ that do not intersect $S$%
  \footnote{In particular, the inclusion $\spec R[S^{-1}]\to\spec R$ is a flat morphism.}.

  The {\bf localization of $M$ at $S$} is the $R[S^{-1}]$-module $S^{-1}M$ defined similarly.
  In particular, we have $S^{-1}M= R[S^{-1}] \otimes_R M$ (see \cite[Lemma 2.4]{de}).
  \end{definition}

 In general, one can obtain quasi-local rings from any ring $R$ through {\bf localization at a prime ideal}:

\begin{definition} Let $R$ be a ring and $\p\subset R$ be any prime ideal.

  The {\bf localization of $R$ at $\p$} is defined as $R_\p=R[S^{-1}]$ where $S=R\setminus \p$.
  For any $R$-module $M$, denote $M_\p:=S^{-1}M=R_\p\otimes_R M$.
  \end{definition}

\begin{proposition}
  Let $R$ and $\p$ be as defined above, then $(R_\p, \p R_\p)$ is a quasi-local ring.
  \end{proposition}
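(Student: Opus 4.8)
The plan is to prove that $R_\p$ is quasi-local by exhibiting its non-units as a single ideal, namely $\p R_\p$, and then invoking the standard fact that a (commutative, unital) ring in which the non-units form an ideal has that ideal as its unique maximal ideal. Indeed, any proper ideal of a ring is disjoint from the set of units, so if the non-units constitute an ideal $\m$, then every proper ideal is contained in $\m$; since $\m$ is itself proper (it does not contain $1$), it is the unique maximal ideal.

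The heart of the argument is therefore a characterization of the units of $R_\p = R[S^{-1}]$ with $S = R \setminus \p$: I claim that $\tfrac{r}{s} \in R_\p$ is a unit if and only if $r \notin \p$, i.e.\ $r \in S$. One direction is immediate: if $r \in S$ then $\tfrac{s}{r}$ is a legitimate element of $R_\p$ and $\tfrac{r}{s} \cdot \tfrac{s}{r} = \tfrac{1}{1}$. For the converse, suppose $\tfrac{r}{s}$ has an inverse $\tfrac{a}{b}$; then $\tfrac{ra}{sb} = \tfrac{1}{1}$, so by the definition of $\sim$ there is $t \in S$ with $t(ra - sb) = 0$, that is $tra = tsb$. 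If $r$ were in $\p$, the left side would lie in $\p$, forcing $tsb \in \p$; but $t, s \in S$ and $S = R \setminus \p$ is multiplicatively closed (because $\p$ is prime), so $ts \notin \p$, and primality of $\p$ would then give $b \in \p$. This contradicts $b \in S$, which holds because $\tfrac{a}{b}$ is a well-formed fraction in $R_\p$. Hence $r \notin \p$, proving the claim.

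It remains to identify the non-units with $\p R_\p$. By the claim, the non-units are exactly $\{\tfrac{r}{s} : r \in \p,\ s \in S\}$. Every such element equals $\tfrac{r}{1}\cdot\tfrac{1}{s}$ and so lies in the extended ideal $\p R_\p$; conversely a general element of $\p R_\p$ is a finite sum $\sum_i \tfrac{p_i}{1}\tfrac{a_i}{s_i}$ with $p_i \in \p$, which after clearing denominators has the form $\tfrac{r}{s}$ with $r \in \p$. Thus the set of non-units equals $\p R_\p$, and the opening remark finishes the proof. The main obstacle is the converse direction of the unit characterization: the key subtlety is that the denominator $b$ of any fraction in $R_\p$ is automatically an element of $S$ and hence outside $\p$, which is precisely what turns the primality of $\p$ into a contradiction. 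I note that one could instead deduce the result from the prime-ideal bijection recalled above: the primes of $R_\p$ correspond, compatibly with inclusion, to the primes of $R$ that do not intersect $S$, i.e.\ to the primes contained in $\p$, whose unique maximal element $\p$ corresponds to $\p R_\p$.
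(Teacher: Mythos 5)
Your proof is correct, but it takes a genuinely different route from the paper. The paper's proof leans entirely on the prime-ideal correspondence recalled in the definition of localization: for any prime $\q\subset R_\p$, the contraction $\varphi^{-1}(\q)$ is a prime of $R$ not meeting $S=R\setminus\p$, hence is contained in $\p$, so $\q\subset\p R_\p$; thus $\p R_\p$ contains every prime (in particular every maximal) ideal and is the unique maximal ideal. You instead argue from first principles: you characterize the units of $R_\p$ as exactly the fractions $\tfrac{r}{s}$ with $r\notin\p$ (your converse direction is sound, and the key subtlety you flag --- that the denominator of any inverse lies in $S$, so primality of $\p$ yields the contradiction --- is exactly right), identify the set of non-units with $\p R_\p$, and invoke the standard fact that a ring whose non-units form an ideal is quasi-local with that ideal as its maximal ideal. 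The trade-off: the paper's argument is shorter and yields slightly more (every prime of $R_\p$, not just every maximal ideal, sits inside $\p R_\p$), but it outsources the prime correspondence to a cited result; your argument is self-contained, using only the equivalence relation defining $R[S^{-1}]$ and the primality of $\p$, and the unit characterization you establish is independently useful --- the paper itself quietly relies on $R_\p^* \supseteq R\setminus\p$ later, e.g.\ in the proof of Proposition \ref{lift}. You also correctly note the bijection-based shortcut as an alternative, which is precisely the paper's proof.
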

\begin{proof}
  Recall that $\varphi:R\to R_\p$ induces a bijection between the set of prime ideals of $R_\p$ and the set of prime
  ideals of $R$ that do not intersect $S=R\setminus\p$, i.e., prime ideals of $R$ that are contained in $\p$.
  In particular, if $\q\subset R_\p$ is a prime, then $\varphi^{-1}(\q)\subset\p$, hence $\q\subset\p R_\p=\m$.
  Therefore $\m$ contains every prime ideal of $R_\p$ and is the unique maximal ideal.
  \qed
  \end{proof}

 This procedure is canonical in the sense that it satisfies the following universal property:

\begin{proposition}[Universal Property of Localization]
  Let $\varphi:R\to R[S^{-1}]$ be given by $r\mapsto r/1$.
  Suppose there is a ring $R'$ along with a homomorphism $\psi:R\to R'$ such that for any $s\in S$, $\psi(s)$ is a 
  unit in $R'$, then there is a unique homomorphism $\sigma:R[S^{-1}]\to R'$ such that $\psi=\sigma\varphi$
  (see \cite[pp. 60]{de}). %\FIXME
  \end{proposition}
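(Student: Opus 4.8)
The plan is to prove existence by writing down the only formula that $\sigma$ could possibly have, verifying it has all the required properties, and then giving a short uniqueness argument that shows this formula was forced from the outset.

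First I would define $\sigma:R[S^{-1}]\to R'$ on representatives by $\sigma(\tfrac rs)=\psi(r)\psi(s)^{-1}$, which makes sense precisely because $\psi(s)$ is a unit in $R'$ by hypothesis. The crucial step, and the one I expect to be the main obstacle, is checking that this rule is well-defined, i.e. independent of the chosen representative. If $\tfrac rs=\tfrac{r'}{s'}$, then by the definition of the equivalence relation there is some $t\in S$ with $t(s'r-sr')=0$. Applying the homomorphism $\psi$ gives $\psi(t)(\psi(s')\psi(r)-\psi(s)\psi(r'))=0$, and here is where the hypothesis earns its keep: since $\psi(t)$ is a unit, I can cancel it to obtain $\psi(s')\psi(r)=\psi(s)\psi(r')$, and multiplying through by $\psi(s)^{-1}\psi(s')^{-1}$ yields $\psi(r)\psi(s)^{-1}=\psi(r')\psi(s')^{-1}$. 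Thus $\sigma$ is well-defined.

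Next I would check that $\sigma$ is a ring homomorphism, which is routine once well-definedness is secured. Additivity and multiplicativity follow by applying $\sigma$ to the standard formulas for the sum and product of fractions and using that $\psi$ respects the ring operations; for instance $\sigma(\tfrac rs+\tfrac{r'}{s'})=\psi(rs'+r's)\psi(ss')^{-1}$, which expands to $\psi(r)\psi(s)^{-1}+\psi(r')\psi(s')^{-1}$, and the product is handled the same way. That $\sigma(\tfrac11)=1$ is immediate. The factorization is then just the computation $\sigma(\varphi(r))=\sigma(\tfrac r1)=\psi(r)$, so $\psi=\sigma\varphi$.

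Finally, for uniqueness, suppose $\sigma'$ is any homomorphism with $\psi=\sigma'\varphi$. The key observation is that $\tfrac rs=\varphi(r)\varphi(s)^{-1}$ in $R[S^{-1}]$, since $\varphi(s)=\tfrac s1$ is a unit there. Because any ring homomorphism sends units to units and inverses to inverses, I would compute $\sigma'(\tfrac rs)=\sigma'(\varphi(r))\,\sigma'(\varphi(s))^{-1}=\psi(r)\psi(s)^{-1}=\sigma(\tfrac rs)$, so $\sigma'=\sigma$. The only subtlety worth flagging is that the cancellation of $\psi(t)$ in the well-definedness step is exactly where the unit hypothesis on $\psi(S)$ is indispensable; without it $\sigma$ need not even be well-defined.
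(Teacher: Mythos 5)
Your proof is correct and complete: the well-definedness check correctly exploits that $\psi(t)$ is a unit for $t\in S$, and the uniqueness argument via $\tfrac rs=\varphi(r)\varphi(s)^{-1}$ is exactly right. The paper itself gives no proof of this proposition, deferring to the cited reference, and your argument is the standard one found there, so there is nothing to flag.
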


 Another useful property of localization is flatness:

\begin{definition}
  An $R$-module $F$ is {\bf flat} if $-\otimes_R F$ is an exact functor. 
  That is, if
  $$0\to M'' \to M \to M' \to 0$$
  is an exact sequence of $R$-modules, then 
  $$0\to M''\otimes_R F \to M\otimes_R F \to M'\otimes_R F \to 0$$
  is again an exact sequence.
  \end{definition}

\begin{lemma}[{\cite[Proposition 2.5]{de}}]\label{flat}
  Let $R$ be a ring and $S\subset R$ be a multiplicative closed set.
  The localization $R[S^{-1}]$ is a flat $R$-module.
  \end{lemma}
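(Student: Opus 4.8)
The plan is to reduce the flatness of $R[S^{-1}]$ to the exactness of the localization functor $S^{-1}(-)$, and then to verify that exactness directly from the definition of the localized module. The crucial input is the natural isomorphism $S^{-1}M \cong R[S^{-1}]\otimes_R M$ recorded in the definition above (citing \cite[Lemma 2.4]{de}), which identifies the functor $-\otimes_R R[S^{-1}]$ with $S^{-1}(-)$. Under this identification, showing that $-\otimes_R R[S^{-1}]$ is exact amounts to showing that localizing a short exact sequence of $R$-modules again yields a short exact sequence of $R[S^{-1}]$-modules, which is precisely the condition defining flatness.

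So I would let $0 \to M'' \xrightarrow{f} M \xrightarrow{g} M' \to 0$ be exact and check exactness of the localized sequence $0 \to S^{-1}M'' \xrightarrow{S^{-1}f} S^{-1}M \xrightarrow{S^{-1}g} S^{-1}M' \to 0$ spot by spot, where each induced map is $\tfrac ms \mapsto \tfrac{f(m)}s$ and similarly for $g$. Surjectivity of $S^{-1}g$ is immediate: any $\tfrac{m'}s$ is the image of $\tfrac ms$ for a preimage $m$ of $m'$ under $g$. For exactness in the middle, the inclusion $\im(S^{-1}f)\subseteq\ker(S^{-1}g)$ follows from $g\circ f=0$; for the reverse, if $\tfrac{g(m)}s=0$ then some $t\in S$ kills $g(m)$, so $g(tm)=0$, hence $tm=f(m'')$ lies in the image of $f$, and then $\tfrac ms=\tfrac{f(m'')}{ts}$ is in the image of $S^{-1}f$.

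The one genuinely non-formal step, and the main obstacle, is the injectivity of $S^{-1}f$, i.e. left-exactness, since tensoring is only automatically right exact. Here I would argue as follows: if $\tfrac{m''}s$ lies in $\ker(S^{-1}f)$, then $\tfrac{f(m'')}s = 0$ in $S^{-1}M$, so by the definition of the equivalence relation there is $t\in S$ with $t\,f(m'')=0$. Using $R$-linearity of $f$ this reads $f(t\,m'')=0$, and injectivity of $f$ forces $t\,m''=0$; therefore $\tfrac{m''}s=\tfrac{t\,m''}{ts}=0$. The key observation making this work is that an element of $S$ annihilating $f(m'')$ already annihilates $m''$ once it is pulled through $f$, and this is exactly where one uses that $f$ is a map of $R$-modules rather than merely of localized modules. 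Assembling the three verifications shows that $S^{-1}(-)$ is exact, hence $R[S^{-1}]$ is flat over $R$.
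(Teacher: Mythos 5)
Your proof is correct: the reduction via the natural isomorphism $S^{-1}M \cong R[S^{-1}]\otimes_R M$, the elementwise verification of right-exactness, and the injectivity argument using a $t\in S$ killing $f(m'')$ are all sound. The paper itself gives no proof of this lemma, deferring entirely to the citation \cite[Proposition 2.5]{de}, and your argument is precisely the standard one found there, so there is nothing to flag.
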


 Later, we will see conditions under which this functor is faithfully flat. For now, this lemma has various
 interesting applications:

\begin{corollary} Let $R$ and $S$ be as above and let $\p$ be a prime. 
  \begin{enumerate}
    \item For any ideal $I\subset R$, $(R/I)_\p = R_\p/I_\p$.
    \item Recall that for any $R$-modules $M$ and $N$, $\Hom_R(M,N)$ is the abelian group of $R$-module 
    homomorphisms from $M$ to $N$. In particular, $\Hom_R(M,N)$ is itself an $R$-module, and if $M$ is finitely 
    presented, we have an isomorphism $\Hom_{R[S^{-1}]} (S^{-1}M, S^{-1}N) \cong S^{-1}\Hom_R (M, N)$.
    \item For ideals $I,J\subset R$, recall that $J:I = \{r\in R : rI\subset J\}$ is the quotient ideal. 
    More generally, we can write $J:I\cong\Hom_R(R/I,R/J)$. %\WHY
    In particular, $J_\p:I_\p = (J:I)_\p$. 
    \end{enumerate}
  \end{corollary}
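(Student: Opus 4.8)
The plan is to handle the three parts in order, leaning throughout on the exactness of localization (Lemma~\ref{flat}) and reducing the later parts to the earlier ones. For part (1) I would apply the functor $(-)_\p = R_\p\otimes_R -$ to the short exact sequence $0\to I\to R\to R/I\to 0$; since this functor is exact by Lemma~\ref{flat}, the sequence $0\to I_\p\to R_\p\to (R/I)_\p\to 0$ is again exact, exhibiting $(R/I)_\p$ as the cokernel of the inclusion $I_\p\hookrightarrow R_\p$, which is precisely $R_\p/I_\p$.

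Part (2) is the heart of the statement, and where I expect the real work to lie. Choosing a finite presentation $R^m\to R^n\to M\to 0$ of $M$ and applying the left-exact contravariant functor $\Hom_R(-,N)$ yields an exact sequence $0\to\Hom_R(M,N)\to N^n\to N^m$. Localizing this (exact by Lemma~\ref{flat}) and using that localization commutes with finite direct sums gives $0\to S^{-1}\Hom_R(M,N)\to (S^{-1}N)^n\to (S^{-1}N)^m$. On the other hand, localizing the presentation produces a presentation $(R[S^{-1}])^m\to (R[S^{-1}])^n\to S^{-1}M\to 0$ over $R[S^{-1}]$, and applying $\Hom_{R[S^{-1}]}(-,S^{-1}N)$ gives $0\to\Hom_{R[S^{-1}]}(S^{-1}M,S^{-1}N)\to (S^{-1}N)^n\to (S^{-1}N)^m$. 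The main obstacle is to verify that the natural map $\theta\colon S^{-1}\Hom_R(M,N)\to\Hom_{R[S^{-1}]}(S^{-1}M,S^{-1}N)$, sending $f/s$ to the homomorphism $m/t\mapsto f(m)/st$, fits into a commutative ladder between these two left-exact sequences that is the identity on the terms $(S^{-1}N)^n$ and $(S^{-1}N)^m$. Granting this compatibility, $\theta$ identifies the two kernels and is therefore an isomorphism. Here the finite presentation hypothesis is exactly what is needed: it guarantees that only finitely many copies of $N$ appear, so that localization commutes with the relevant kernel.

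For part (3) I would first record the identification $\Hom_R(R/I,R/J)\cong (J:I)/J$, obtained by sending a homomorphism to the image of $\bar 1$, which must be annihilated by $I$. Taking $M=R/I$, finitely presented once $I$ is finitely generated (for instance when $R$ is Noetherian), and $N=R/J$, part (2) gives $\Hom_{R_\p}((R/I)_\p,(R/J)_\p)\cong\Hom_R(R/I,R/J)_\p$. Rewriting the source by part (1) as $\Hom_{R_\p}(R_\p/I_\p,R_\p/J_\p)$ and applying the same annihilator identification over $R_\p$ turns the left-hand side into $(J_\p:I_\p)/J_\p$, while part (1) turns the right-hand side into $((J:I)/J)_\p = (J:I)_\p/J_\p$. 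Since $J_\p$ is contained in both $J_\p:I_\p$ and $(J:I)_\p$, matching these quotients modulo $J_\p$ yields the desired equality $J_\p:I_\p=(J:I)_\p$.
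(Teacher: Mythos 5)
Your proposal is correct, and for parts (1) and (3) it follows essentially the same route as the paper: part (1) is verbatim the paper's argument (localize the short exact sequence $0\to I\to R\to R/I\to 0$ and read off the cokernel), and part (3) is the same chain $J_\p:I_\p = \Hom_{R_\p}(R_\p/I_\p, R_\p/J_\p) \cong \Hom_R(R/I,R/J)_\p = (J:I)_\p$ obtained by combining (1) and (2). The difference is in part (2): the paper does not prove it at all, instead citing \cite[Proposition 2.10]{de}, whereas you supply the standard proof --- presenting $M$ finitely, applying the left-exact functor $\Hom_R(-,N)$, localizing both resulting left-exact sequences, and checking that the natural map $\theta$ fits into a commutative ladder that is the identity on $(S^{-1}N)^n$ and $(S^{-1}N)^m$, so the two kernels coincide. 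That is presumably the same argument as the cited reference, so it is not a genuinely different method, but it does make the corollary self-contained where the paper leaves a black box. You are also more careful than the paper on two small points: you state the identification as $\Hom_R(R/I,R/J)\cong (J:I)/J$ rather than the paper's looser $J:I\cong\Hom_R(R/I,R/J)$, and then recover the equality of ideals by noting both sides contain $J_\p$ and comparing quotients; and you flag that $R/I$ is finitely presented only once $I$ is finitely generated (automatic in the paper's Noetherian setting, but left implicit there). One point you share with the paper and could sharpen: the final step tacitly uses that the isomorphism of part (2) is the \emph{natural} map, so that it really matches the submodules $(J_\p:I_\p)/J_\p$ and $(J:I)_\p/J_\p$ inside $R_\p/J_\p$ rather than giving a mere abstract isomorphism; your construction of $\theta$ makes this checkable, while the paper's chain of equalities glosses over it entirely.
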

\begin{proof}
  We present the proofs as they exemplify the usefulness of flatness.
  \begin{enumerate}
    \item Consider the exact sequence of $R$-modules $0\to I\to R\to R/I \to 0$. Since $-\otimes_R R_\p$ is flat, we
    have another exact sequence $0\to I_\p \to R_\p \to (R/I)_\p \to 0$. Exactness of this sequence gives 
    $(R/I)_\p = \coker(I_\p \to R_\p)=R_\p/I_\p$.
    \item See \cite[Proposition 2.10]{de}.
    \item Using (1) and (2) we get:
     $$J_\p:I_\p = \Hom_{R_\p}(R_\p/I_\p,R_\p/J_\p) 
                   = \Hom_{R_\p}\big((R/I)_\p, (R/J)_\p\big) 
               \cong \Hom_R(R/I, R/J)_\p
                   = (J:I)_\p.$$
    \end{enumerate}
  Similar techniques will be used in future sections to prove correctness of procedures.
  \qed
  \end{proof}

%%%%%%%%%%%%%%%%%%%%%%%%%%%%%%%%%%%%%%%%%%%%%%%%%%%%%%%%%%%%%%%%%%%%%%%%%%%%%%%%%%%%%%%%%%%%%%%%%%%%%%%%%%%%%%%%%%%%

\subsubsection{Geometric Intuition}
 At this point, it is appropriate to elaborate on the geometric picture behind localization:
 Let $A=\kk[x_1,\dots,x_r]$ be the polynomial ring with $r$ indeterminates and consider an affine variety $X$ in 
 $\A_\kk^r=\spec A$, the affine $r$-space.
 Suppose $R$ is the affine coordinate ring of $X$ and %corresponds to the ideal $I\subset A$, so $R=A/I$, and 
 take any point $P\in X$ corresponding to a prime ideal $\p\subset R$ of functions vanishing at $P$.
 Now, in order to study the behavior of $X$ ``near'' $P$, we can invert all functions in $R$ that do not vanish at 
 $P$, i.e., adjoin inverses for all functions in $R\setminus\p$. This process yields exactly the local ring $R_\p$, 
 the coordinate ring of the neighborhood of $P$ in $X$.

 Clearly the local ring of $R$ at a prime $\p$ is, in general, not finitely generated over $R$. This is the main 
 obstacle in developing computational methods (such as Gr\"obner bases) for local rings.
 However, certain computations remain tractable when we consider finitely generated modules over Noetherian rings.

%%%%%%%%%%%%%%%%%%%%%%%%%%%%%%%%%%%%%%%%%%%%%%%%%%%%%%%%%%%%%%%%%%%%%%%%%%%%%%%%%%%%%%%%%%%%%%%%%%%%%%%%%%%%%%%%%%%%

\subsection{Preliminaries}
 In the next few definitions we describe the main tools used in our computations.

\begin{definition} Let $R$ be a ring and $M$ an $R$-module.

  A {\bf projective resolution of $M$} is an acyclic complex of $R$-modules
  $$P_\bullet:\cdots\to P_2\to P_1\to P_0\to M\to 0$$
  such that each $P_i$ is a projective $R$-module. In particular, every $R$-module has a projective resolution.
  \end{definition}

 Resolutions are used widely in homological algebra to construct invariants of objects.
 In particular, using the following proposition we know that every finitely generated module over a local ring has 
 a free resolution. %\FIXME is it finite? Counter example in Serre-Kaplanski?

\begin{proposition}%[\TODO]
  \label{prop:fpf}
  For finitely generated modules over local rings, flatness, projectivity and freeness are all equivalent.
  \end{proposition}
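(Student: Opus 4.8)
The plan is to establish the chain of implications freeness $\Rightarrow$ projectivity $\Rightarrow$ flatness $\Rightarrow$ freeness, where the first two directions hold over any ring and only the last requires both the local and finite-generation hypotheses. The setup is a local ring $(R,\m)$ with residue field $\kk = R/\m$ and a finitely generated $R$-module $M$.

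First I would dispatch the easy directions. Every free module is projective since free modules are summands (indeed equal to) direct sums of copies of $R$, and direct sums of projectives are projective. That projective implies flat follows because a projective module is a direct summand of a free module $F$, and $F$ is flat (it is a direct sum of copies of $R$, and $-\otimes_R R$ is the identity functor, hence exact); a direct summand of a flat module is flat since the tensor functor commutes with direct sums and exactness can be checked on the summand. These steps are formal and I would state them in a sentence or two.

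The substantive direction is flatness $\Rightarrow$ freeness, and here is where I would spend the effort. The strategy is to choose a minimal generating set and show it is a basis. Concretely, pick elements $m_1,\dots,m_n\in M$ whose images form a $\kk$-basis of $M/\m M = M\otimes_R\kk$; by Nakayama's lemma these generate $M$, giving a surjection $\varphi\colon F = R^n \to M$ with $F\otimes_R\kk \to M\otimes_R\kk$ an isomorphism. Let $K=\ker\varphi$, so that
\[
 0 \to K \to F \xrightarrow{\ \varphi\ } M \to 0
\]
is exact. The key step is to tensor this sequence with $\kk$: because $M$ is flat, $\Tor_1^R(M,\kk)=0$, so the sequence stays exact on the left, yielding $0\to K\otimes_R\kk \to F\otimes_R\kk \to M\otimes_R\kk\to 0$. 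Since $F\otimes_R\kk\to M\otimes_R\kk$ is already an isomorphism by construction, we get $K\otimes_R\kk = K/\m K = 0$. Finally, $K$ is finitely generated (it is a submodule of the Noetherian module $F$, using that $R$ is Noetherian and $M$ finitely generated), so Nakayama's lemma forces $K=0$, whence $\varphi$ is an isomorphism and $M\cong R^n$ is free.

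**The main obstacle** is the appeal to $\Tor_1^R(M,\kk)=0$: the clean derivation that a flat module kills $\Tor_1$ should be justified, either by invoking the standard characterization of flatness via vanishing of $\Tor_1$ or by arguing directly that tensoring the short exact sequence by any module leaves it exact because $M$ is flat (applying right-exactness together with flatness of $M$ to the inclusion $\ker\varphi\hookrightarrow F$). I would also take care that Nakayama is applied only to finitely generated modules, which is exactly why both hypotheses on $R$ and $M$ are needed; without finite generation the argument collapses, as $\Q$ is a flat but non-free $\Z$-module. The remaining bookkeeping, namely that submodules of Noetherian modules are finitely generated and that $M\otimes_R\kk \cong M/\m M$, is routine and can be cited from the preliminaries.
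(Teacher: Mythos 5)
Your proof is correct, but there is nothing in the paper to compare it against: the paper's entire ``proof'' of Proposition \ref{prop:fpf} is a citation to \cite[Theorem 7.10]{MH}. What you have written is essentially the standard argument found in that reference, so you have filled in the content the paper defers to the literature: the formal implications free $\Rightarrow$ projective $\Rightarrow$ flat, and then the substantive step via a surjection $R^n \to M$ from a minimal generating set, vanishing of $\Tor_1^R(M,\kk)$ by flatness, hence $K/\m K = 0$ for the kernel $K$, and Nakayama (the paper's Lemma \ref{nakayama} and Corollary \ref{cor:nak}) to conclude $K=0$. Your handling of the two hypotheses is right: the paper's definition of local ring includes Noetherian, which is exactly what you use to get $K$ finitely generated; it is worth noting that Matsumura's theorem holds for finitely \emph{presented} modules over quasi-local rings without any Noetherian assumption, and your argument shows where finite presentation enters (Noetherian plus finitely generated gives it for free). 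One small quibble on a side remark: your counterexample showing the necessity of finite generation, $\Q$ as a flat non-free $\Z$-module, is not apt as stated since $\Z$ is not local; replace it with $\Q$ over the local ring $\Z_{(p)}$, which is flat (a localization) but not free, and the point stands.
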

\begin{proof}
  See \cite[Theorem 7.10]{MH}. %\FIXME
  \qed
  \end{proof}

 An extraordinarily useful tool in the theory of local rings is the following lemma:

\begin{lemma}[{T. Nakayama, cf. \cite[Corollary 4.8]{de}}]\label{nakayama}
  Let $(R,\m)$ be a quasi-local ring and let $M$ be a finite $R$-module. 
  Then a subset $u_1,\dots,u_n$ of $M$ is a generating set for $M$ if and only if the set of residue classes 
  $\{u'_i\}$ is a generating set for $M/\m M$ over the field $R/\m$%
  \footnote{Nagata, following Nakayama, credits this lemma to W. Krull and G. Azumaya, whose generalization of this 
  lemma holds for any ring $R$ and its Jacobson radical $\m$, defined as the intersection of all maximal ideals of 
  $R$. Note that in local rings the Jacobson radical is the same as the unique maximal ideal}.
  \end{lemma}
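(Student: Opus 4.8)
The plan is to treat the two implications separately: the forward direction is essentially formal, while the converse is the real content and rests on the core form of Nakayama's lemma. For the forward direction, suppose $u_1,\dots,u_n$ generate $M$. Since the quotient map $M\to M/\m M$ is a surjective $R$-module homomorphism, the residue classes $u_1',\dots,u_n'$ automatically generate $M/\m M$, now viewed as a vector space over the field $R/\m$. No use of the quasi-local hypothesis is needed here.

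The substance lies in the converse. Suppose the residue classes $u_1',\dots,u_n'$ span $M/\m M$ over $R/\m$, and let $N\subseteq M$ be the submodule generated by $u_1,\dots,u_n$. The spanning hypothesis says precisely that $N+\m M=M$. Passing to the quotient $Q:=M/N$, which is finitely generated because $M$ is, the image of $\m M$ is $\m Q$, so the relation $N+\m M=M$ becomes $\m Q=Q$. Thus the whole problem reduces to the core statement
\[
  \m Q = Q \text{ with } Q \text{ a finite } R\text{-module} \implies Q = 0,
\]
after which $Q=M/N=0$ gives $M=N$, i.e.\ $u_1,\dots,u_n$ generate $M$.

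I would prove this core statement by the determinant (Cayley--Hamilton) trick, which is where the only genuine difficulty sits. Choosing generators $q_1,\dots,q_k$ of $Q$ and using $\m Q=Q$, I would write each $q_i=\sum_j a_{ij}q_j$ with $a_{ij}\in\m$, so that $(\mathrm{Id}-A)\mathbf{q}=0$ for the matrix $A=(a_{ij})$. Multiplying by the adjugate yields $\det(\mathrm{Id}-A)\cdot q_i=0$ for every $i$, and expanding the determinant shows $\det(\mathrm{Id}-A)=1+a$ for some $a\in\m$. The crux is the observation that in a quasi-local ring every element of $1+\m$ is a unit: such an element lies in no maximal ideal, since the only maximal ideal is $\m$ and $1+a\in\m$ would force $1\in\m$. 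Hence $\det(\mathrm{Id}-A)$ is invertible, forcing $q_i=0$ for all $i$ and so $Q=0$. An alternative that avoids the adjugate is to take a minimal generating set of $Q$ and derive a contradiction from $\m Q=Q$ using the same unit fact; I would mention this as the quicker route. In either case the one ingredient that requires the quasi-local hypothesis is the invertibility of $1+\m$, and the argument generalizes verbatim with $\m$ replaced by the Jacobson radical, matching the remark in the footnote.
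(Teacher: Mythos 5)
Your proof is correct: the reduction of the converse to the core statement $\m Q = Q \Rightarrow Q = 0$ for a finite module $Q = M/N$, settled by the Cayley--Hamilton/adjugate trick together with the observation that $1+\m$ consists of units in a quasi-local ring, is exactly the standard argument, and your remark that only this last fact uses the quasi-local hypothesis (so the proof generalizes to the Jacobson radical) is also accurate. The paper itself supplies no proof, deferring entirely to \cite[Corollary 4.8]{de}, and the proof given there proceeds by the same determinant-trick route, so your argument coincides with the cited one.
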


\begin{corollary}\label{cor:nak}
  Let $(R,\m)$ and $M$ be as above. 
  \begin{enumerate}
    \item If $M/\m M = 0$, then $M=0$.
    \item If $N$ is a submodule of $M$ such that $M=\m M + N$, then $M=N$.
    \item Any generating set for $M$ contains a minimal generating set for $M$ as a subset; 
    if $u_1,\dots,u_m$ and $v_1,\dots,v_n$ are two minimal generating sets of $M$, then $m=n$ and there is an 
    invertible $n\times n$ matrix $T$ over $R$ such that $\bar v=\bar uT$ (see \cite[Corollary 5.3]{nagata}).
    \end{enumerate}
  \end{corollary}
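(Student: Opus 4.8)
The plan is to derive all three parts from Nakayama's Lemma (Lemma \ref{nakayama}). For part (1), I would apply the lemma to the empty generating set: the empty set generates $M/\m M$ over the field $R/\m$ precisely when $M/\m M = 0$, so by the lemma it must also generate $M$, forcing $M = 0$. Part (2) then reduces to part (1) by passing to the quotient $M/N$, which is again a finite $R$-module. The hypothesis $M = \m M + N$ says exactly that $\m(M/N) = (\m M + N)/N = M/N$, so $(M/N)/\m(M/N) = 0$, and part (1) yields $M/N = 0$, i.e. $M = N$.

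For part (3) the key observation is that $M/\m M$ is a finite-dimensional vector space over the field $R/\m$, which lets me import standard linear algebra. Given any generating set, its residue classes span $M/\m M$; I would extract a subset whose images form a basis, and Nakayama guarantees that this subset still generates $M$. Such a subset is in fact a minimal generating set, since deleting any element destroys linear independence of the images and hence, by Nakayama again, destroys the property of generating $M$. Running the same argument in reverse shows that every minimal generating set maps to a basis of $M/\m M$, so its cardinality equals $\dim_{R/\m}(M/\m M)$; in particular two minimal generating sets $u_1,\dots,u_m$ and $v_1,\dots,v_n$ must satisfy $m = n$.

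It remains to produce the invertible matrix $T$. Since $u_1,\dots,u_n$ generates $M$, each $v_j$ can be expressed as an $R$-linear combination $v_j = \sum_i t_{ij} u_i$, assembling into a matrix $T = (t_{ij})$ with $\bar v = \bar u T$. The main obstacle is upgrading invertibility over the residue field to invertibility over $R$ itself. Here I would exploit that $R$ is quasi-local: reducing $T$ modulo $\m$ gives the change-of-basis matrix between the two bases $\{\bar u_i\}$ and $\{\bar v_j\}$ of $M/\m M$, which is invertible over $R/\m$, so $\det T \notin \m$. Because in a quasi-local ring every element outside the maximal ideal is a unit, $\det T$ is a unit in $R$, and therefore $T$ is invertible over $R$. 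This final passage from the residue field back to $R$ is the crux of the argument.
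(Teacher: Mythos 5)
Your proof is correct, and it is in substance the standard argument: the paper offers no internal proof of this corollary, deferring entirely to \cite[Section 2]{MH}, and your derivation---part (1) from Lemma \ref{nakayama} applied to the empty generating set, part (2) reduced to part (1) via the finite module $M/N$ with $\m(M/N)=(\m M+N)/N=M/N$, and part (3) via linear algebra in the $R/\m$-vector space $M/\m M$ together with the observation that $\det T\notin\m$ forces $\det T$ to be a unit in the quasi-local ring $R$, hence $T$ invertible by the adjugate formula---is exactly the argument the cited reference contains. There are no gaps; in particular you correctly handled the two points where care is needed, namely extracting from the images of an arbitrary generating set a finite subset whose residues form a basis (which is what makes the ``contains a minimal generating set'' clause work) and upgrading invertibility of $T \bmod \m$ over the residue field to invertibility over $R$.
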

\begin{proof}
  See \cite[Section 2]{MH}.% \TODO 
  \qed
  \end{proof}

 This corollary implies that {\it minimal free resolutions} are well defined over local rings, but first we need to
 make a new definition:

\begin{definition} Let $(R,\m)$ and $M$ be as above.

  Let $m_1,\dots,m_n$ be a generating set for $M$.
  The {\bf relation module of the elements $m_i$} is the set $N$ of elements $(r_1,\dots,r_n)\in R^n$ such that 
  $\sum r_i m_i=0$. Clearly $N$ has an $R$-module structure. In particular, there is an exact sequence:
  $$0\to N\to R^n \to M \to 0.$$

  When $m_i$ form a minimal generating set for $M$, $N$ is called the {\bf relations module of $M$}.

  The {\bf $i$-th Syzygy module of $M$}, denoted as $\syz_R^i(M)$, is defined as the relation module of the 
  $(i-1)$-th Syzygy module of $M$ when $i>0$ and $M$ when $i=0$.
  \end{definition}

 Note that the definition above can be extended to other rings, but the resulting module depends on the choice of
 the generating set. In particular, the corollary above implies that over local rings the syzygies of finite modules
 are unique up to isomorphism:

\begin{theorem}[{\cite[Theorem 26.1]{nagata}}]\label{thm:syz}
  Let $(R, \m)$ be a local ring and $M$ be a finite $R$-module.
  The $i$-th Syzygy module of $M$ is unique up to isomorphism. In particular, if $N$ is the relation module of an
  arbitrary generating set $u_i$ for $M$, then $N=\syz_R^1(M)\oplus R^k$ for some $k$.
  \end{theorem}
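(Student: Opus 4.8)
The plan is to prove the concrete ``in particular'' statement first, since uniqueness up to isomorphism will then follow from it together with Corollary \ref{cor:nak}. Throughout, let $m$ denote the minimal number of generators of $M$, and fix an arbitrary generating set $u_1,\dots,u_n$ with relation module $N=\ker\phi$, where $\phi:R^n\to M$ sends the standard basis $e_i\mapsto u_i$. By Corollary \ref{cor:nak}(3) this generating set contains a minimal generating set, so after reordering I may assume $u_1,\dots,u_m$ is minimal. Let $\pi:R^m\to M$ be the surjection onto the first $m$ coordinates; its kernel is exactly $\syz^1_R(M)$ for this choice of minimal generators.

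Next I would build an explicit change of basis of $R^n$. Since $u_1,\dots,u_m$ generate $M$, for each $j>m$ I can write $u_j=\sum_{i\le m}a_{ji}u_i$ with $a_{ji}\in R$. Keeping $f_i=e_i$ for $i\le m$ and setting $f_j=e_j-\sum_{i\le m}a_{ji}e_i$ for $j>m$, the transformation matrix is unipotent (the identity on the first block), hence invertible over $R$, so $f_1,\dots,f_n$ is again a free basis. A direct computation gives $\phi(f_i)=u_i$ for $i\le m$ and $\phi(f_j)=u_j-\sum_{i\le m}a_{ji}u_i=0$ for $j>m$. Thus $\phi$ annihilates the free summand $F=\langle f_{m+1},\dots,f_n\rangle\cong R^{\,n-m}$ and agrees with $\pi$ on $\langle f_1,\dots,f_m\rangle\cong R^m$. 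Writing any $x\in R^n$ as $x'+x''$ with $x'\in R^m$ and $x''\in F$, we get $\phi(x)=\pi(x')$, so $x\in N$ precisely when $x'\in\ker\pi=\syz^1_R(M)$, with $x''$ unconstrained. Hence $N=\syz^1_R(M)\oplus F\cong\syz^1_R(M)\oplus R^{k}$ with $k=n-m$.

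It then remains to see that $\syz^1_R(M)$ is itself well defined. Any two minimal generating sets are related by an invertible matrix $T$ (Corollary \ref{cor:nak}(3)), and $T$ induces an automorphism of $R^m$ commuting with the two surjections onto $M$; this automorphism restricts to an isomorphism of their kernels, so $\syz^1_R(M)$ is independent of the chosen minimal generating set up to isomorphism. Uniqueness of the higher syzygies follows by induction on $i$: since $\syz^i_R(M)=\syz^1_R(\syz^{i-1}_R(M))$ and isomorphic modules have isomorphic first syzygies, each $\syz^i_R(M)$ is determined up to isomorphism.

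The step I expect to be most delicate is verifying that the prescribed change of basis is a genuine $R$-automorphism of $R^n$ and that it yields an honest direct-sum splitting of $N$, rather than merely exhibiting $R^{\,n-m}$ as a subquotient. An alternative would apply Schanuel's lemma to get $N\oplus R^m\cong\syz^1_R(M)\oplus R^n$ and then cancel the common free summand using that finitely generated projectives over a local ring are free (Proposition \ref{prop:fpf}); but the explicit argument above is more elementary and pins down the value of $k$, so I would present that one.
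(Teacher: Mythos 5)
Your proof is correct and complete. The paper itself gives only a one-line proof --- ``Follows from Corollary \ref{cor:nak}(2)'' --- deferring the actual work to Nagata's Theorem 26.1, so your write-up supplies precisely the details that citation hides rather than taking a divergent route. Your argument through Corollary \ref{cor:nak}(3) --- extract a minimal generating subset $u_1,\dots,u_m$ from the arbitrary one, kill the redundant generators via the substitution $f_j=e_j-\sum_{i\le m}a_{ji}e_i$, and read off the internal splitting $N=\ker\pi\oplus\langle f_{m+1},\dots,f_n\rangle$ --- is the standard argument behind Nagata's theorem, and the two points you flagged as delicate do go through: the block-unipotent transition matrix is invertible over any commutative ring, and since $\phi(x'+x'')=\pi(x')$ the decomposition of $N$ is an honest internal direct sum rather than a subquotient presentation. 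Your reduction of uniqueness to the minimal case via the invertible matrix $T$ of Corollary \ref{cor:nak}(3), together with induction for the higher syzygies, is also sound; it is worth remarking that each $\syz_R^i(M)$ is again a finite $R$-module (a submodule of a finite free module over the Noetherian ring $R$), so the induction is legitimate. One caution on your proposed alternative: cancelling the free summand from $N\oplus R^m\cong\syz_R^1(M)\oplus R^n$ does \emph{not} follow from Proposition \ref{prop:fpf} alone --- ``finitely generated projective implies free'' is not the same as cancellation, which over a local ring requires a separate argument (e.g., Azumaya--Evans cancellation for summands with local endomorphism ring, here $\mathrm{End}_R(R)=R$). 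Your decision to present the explicit splitting instead, which moreover identifies $k=n-m$, is therefore the right call.
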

\begin{proof} Follows from \ref{cor:nak} (2). %\FIXME
  \qed
  \end{proof}

\begin{remark}
  Suppose $\{m_i\}$ is a generating set for $M$ with $n_0$ elements and the relation module $\syz_R^1(\{m_i\})$ has 
  a generating set with $n_1$ elements. Then, using the exact sequence in the definition above we can find an exact
  sequence:
  $$R^{n_1}\to R^{n_0}\to M\to 0.$$
  Repeating this process inductively for $\syz_R^i(\{m_i\})$, we can construct a free resolution for $M$.
  \end{remark}

\begin{definition}\label{def:minres}
  Let $(R, \m)$ be a local ring and let $M$ be an $R$-module.

  A {\bf minimal free resolution $F$ of $M$} is a resolution:
  $$F_\bullet:\cdots\xto{\d}F_2\xto{\d}F_1\xto{\d}F_0\to M\to 0$$
  such that for every differential, $R/\m\otimes_R \d=0$. That is to say, there are no units in the differentials.
  \end{definition}

\begin{remark}
  Theorem \ref{thm:syz} implies that any resolution of $M$ contains the minimal free resolution as a summand and 
  the minimal free resolution is unique up to changes of basis.
  \end{remark}

 Minimal free resolutions, when they exist, capture many structural invariants of modules, which is why many
 results are limited to the local and graded cases where Nakayama's Lemma holds. In particular, everything that 
 follows also applies to the graded case by setting $\m$ to be the irrelevant ideal. That is, when 
 $R=\oplus_{i\geq 0}R_i$ is a finitely generated graded algebra over the field $k=R_0$ and $M$ is a finitely 
 generated graded $R$-module.

\newcommand{\rk}{\text{rk}}
\begin{definition}\label{betti}
  Let $(R, \m)$, $M$, $F_\bullet$ be as above and suppose $F_i\cong R^{\oplus r_i}$.

  The {\bf $i$-th Betti number of $M$}, $\betti^R_i(M)$ is defined as the rank of the $i$-th module in the minimal 
  free resolution of $M$:
  $$\betti^R_i(M) = \rk_R F_i = r_i.$$
  Equivalently, this is the minimal number of generators of the $i$-th Syzygy module of $M$:
  $$\syz_R^i(M) = \ker\d_{i-1} = \im\d_i \cong \coker\d_{i+1}.$$
  \end{definition}

%%%%%%%%%%%%%%%%%%%%%%%%%%%%%%%%%%%%%%%%%%%%%%%%%%%%%%%%%%%%%%%%%%%%%%%%%%%%%%%%%%%%%%%%%%%%%%%%%%%%%%%%%%%%%%%%%%%%

\subsection{Artinian Local Rings}
 Besides dimension and projective dimension, a very useful tool in building invariants for local rings and modules
 over them is the length.
 Another useful construction in commutative ring theory is completion. Localization at a prime ideal followed by 
 completion at the maximal ideal a common step in many proofs. Many problems, can be attacked by first reducing to 
 the local case and then to the complete case. In particular, Artinian local rings are automatically complete and
 therefore share the nice properties of complete local rings. As an example, when $(R,\m)$ is local, $\hat R$ is 
 local and faithfully flat over $R$.

 In this section we review important results leading up to the definition of the 
 Hilbert-Samuel function.
%\TODO hilbert-samuel polynomial, completion

%\subsection{Complete Local Rings}
% Faithfully flat

\begin{definition} 
  A ring $R$ is {\bf Artinian} if any descending chain of ideals is finite.
  \end{definition}

\begin{definition} 
  A {\bf composition series} of an $R$-module $M$ is a chain of inclusions:
  $$M=M_0\supset M_1\supset\cdots\supset M_n,$$
  where the inclusions are strict and each $M_i/M_{i+1}$ is a simple module.

  The {\bf length of $M$} is defined as the least length of a composition series for $M$, or $\infty$ if $M$ has no
  finite composition series.
  \end{definition}

\begin{theorem}[{\cite[Theorem 2.14]{de}}]
  Let $R$ be a ring $R$. The following conditions are equivalent:
  \begin{itemize}
    \item $R$ is Artinian.
    \item $R$ has finite length as an $R$-module. 
    \item $R$ is Noetherian and all the prime ideals in $R$ are maximal.
    \end{itemize}
  \end{theorem}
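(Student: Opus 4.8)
The plan is to route all three conditions through the notion of \emph{finite length}, using the standard fact that a module has finite length exactly when it satisfies both the ascending and the descending chain conditions on submodules. Applied to $R$ as a module over itself, this observation hands me the easy implications for free. Finite length forces the descending chain condition, so condition (2) immediately gives (1); finite length also forces the ascending chain condition, so $R$ is Noetherian under (2). To complete (2) $\Rightarrow$ (3) I would show every prime $\p$ is maximal by passing to the domain $R/\p$, which inherits finite length and hence the descending chain condition. A domain satisfying the descending chain condition is a field: for nonzero $a$ the chain $(a)\supseteq(a^2)\supseteq\cdots$ stabilizes, so $a^n=a^{n+1}c$ for some $c$, whence $a^n(1-ac)=0$ and, $R/\p$ being a domain with $a\neq0$, $a$ is a unit. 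Thus $\p$ is maximal.

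The substantive content lies in the two implications that actually manufacture a finite composition series. For (1) $\Rightarrow$ (2) I would first establish that an Artinian ring has only finitely many maximal ideals: the descending chain condition supplies a minimal element among finite intersections $\m_1\cap\cdots\cap\m_n$ of maximal ideals, and any further maximal ideal, being prime and containing the product $\m_1\cdots\m_n$, must coincide with one of the $\m_i$. The crux is then to prove that the Jacobson radical $J=\m_1\cap\cdots\cap\m_n$ is nilpotent. The chain $J\supseteq J^2\supseteq\cdots$ stabilizes at $\mathfrak{a}=J^k$ with $\mathfrak{a}^2=\mathfrak{a}$; if $\mathfrak{a}\neq0$, a minimal-element argument among ideals $\mathfrak{b}$ with $\mathfrak{b}\mathfrak{a}\neq0$ produces $x$ with $x\mathfrak{a}=(x)\neq0$, so $x=xy$ with $y\in\mathfrak{a}\subseteq J$, and since $1-y$ is then a unit, $x(1-y)=0$ forces $x=0$, a contradiction. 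With $J^k=0$ secured, the filtration $R\supseteq J\supseteq\cdots\supseteq J^k=0$ has successive quotients that are modules over $R/J\cong\prod_i R/\m_i$ (a finite product of fields, by comaximality and the Chinese Remainder Theorem) and are Artinian, hence finite-dimensional; each therefore has finite length, and summing shows $R$ does too.

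The reverse step (3) $\Rightarrow$ (2) is more routine once the Noetherian hypothesis is in force. Here $R$ has only finitely many minimal primes, which the dimension-zero hypothesis makes equal to the finitely many maximal ideals $\m_1,\dots,\m_n$, and the nilradical $\mathfrak{n}=\m_1\cap\cdots\cap\m_n$ is nilpotent because it is finitely generated. Listing the factors of a vanishing product $\m_{i_1}\cdots\m_{i_N}=0$, I would form the filtration $R\supseteq\m_{i_1}\supseteq\m_{i_1}\m_{i_2}\supseteq\cdots\supseteq0$ whose successive quotients are each annihilated by some $\m_{i_j}$, hence are finitely generated vector spaces over the field $R/\m_{i_j}$ and of finite length; adding these lengths yields finite length for $R$. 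The main obstacle throughout is precisely the nilpotence of the Jacobson radical in the purely Artinian case, where finite generation is unavailable and the argument must instead extract nilpotence from the descending chain condition alone, via the minimal-counterexample idea above.
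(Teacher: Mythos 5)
Your proof is correct, and since the paper offers no argument of its own for this theorem --- it simply cites \cite[Theorem 2.14]{de} --- there is nothing to diverge from: your argument is precisely the standard one from that reference (and Atiyah--Macdonald), routing everything through finite length, extracting finitely many maximal ideals and the nilpotence of the Jacobson radical from the descending chain condition via the minimal-counterexample argument, and building a composition series from the filtration by products of maximal ideals. All the key steps check out, including the subtle one ($J^k=0$ without Noetherian hypotheses) and the closing of the logical cycle $(2)\Rightarrow(1)$, $(2)\Rightarrow(3)$, $(1)\Rightarrow(2)$, $(3)\Rightarrow(2)$, which indeed yields the full equivalence.
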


 Let $(R, \m)$ be an Artinian local ring. This theorem implies that $\m$ is its only prime ideal.

\begin{theorem}[{\cite[Theorem 2.13]{de}}]
  Let $R$ be any ring and $M$ be as above. $M$ has a finite composition series if and only if $M$ is Artinian.
  Moreover, every composition series for $M$ has the same length.
  \end{theorem}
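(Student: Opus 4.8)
The plan is to route the whole statement through the \emph{length function} $\ell(M)$, defined as the least length of a composition series of $M$ (and $\ell(M)=\infty$ if no finite one exists), and to isolate a single monotonicity lemma from which both the equivalence and the uniqueness of length drop out. The lemma I would prove first is: if $M$ admits a finite composition series and $N\subsetneq M$ is a proper submodule, then $\ell(N)<\ell(M)$. To prove it I fix a composition series $M=M_0\supsetneq M_1\supsetneq\cdots\supsetneq M_n=0$ with $n=\ell(M)$ and intersect it with $N$, setting $N_i=N\cap M_i$. Since each $N_i/N_{i+1}$ injects into the simple module $M_i/M_{i+1}$, it is itself either $0$ or simple, so after deleting the repeated terms the chain $(N_i)$ becomes a composition series of $N$ and $\ell(N)\le n$ follows at once.

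I expect the strict inequality to be the one genuinely delicate point. If instead $\ell(N)=n$, then no term could have been deleted, forcing $N_i/N_{i+1}=M_i/M_{i+1}$ for every $i$; starting from $N_n=M_n=0$ and inducting downward then gives $N_i=M_i$ for all $i$, hence $N=M$, against properness. Granting the lemma, a strictly descending chain $M=L_0\supsetneq L_1\supsetneq\cdots\supsetneq L_k$ yields $n=\ell(M)>\ell(L_1)>\cdots>\ell(L_k)\ge0$, so no chain can be longer than $n$. This bound on all chains immediately gives the descending chain condition, proving the ``only if'' direction; and since any composition series is in particular a chain, it has length $\le n$, while minimality of $n$ forces length $\ge n$, so every composition series has length exactly $\ell(M)$ --- the asserted uniqueness.

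For the converse I would build a composition series from the top down: assuming $M\neq0$, choose a maximal proper submodule $M_1$, so $M/M_1$ is simple, and iterate inside $M_1$ to obtain $M\supsetneq M_1\supsetneq M_2\supsetneq\cdots$ with simple quotients. Existence of a maximal proper submodule at each step is supplied by the ascending chain condition, while termination of the chain at $0$ is supplied by the descending (Artinian) condition; in the situation this paper actually uses --- $M$ finitely generated over the Noetherian, indeed Artinian, local ring $(R,\m)$ --- both conditions hold automatically, so the process halts and produces the required finite composition series. The hard part throughout is the downward induction in the lemma showing $N_i=M_i$ once no factor is killed; the Artinianness and the equality of lengths are then purely formal consequences.
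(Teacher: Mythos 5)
Your argument is correct, but note first that the paper offers no proof of this theorem at all --- it is quoted with a citation to Eisenbud's Theorem 2.13 --- so the comparison is with that standard reference rather than with an in-paper argument. Your route is exactly the classical one: define $\ell(M)$ as the least composition-series length, prove the monotonicity lemma $\ell(N)<\ell(M)$ for proper $N\subsetneq M$ by intersecting a minimal series with $N$, and extract both the bound on all strict chains (hence DCC) and the Jordan--H\"older uniqueness of length as formal consequences. Your handling of the delicate point is right: each $N_i/N_{i+1}\cong(N_i+M_{i+1})/M_{i+1}$ embeds in the simple module $M_i/M_{i+1}$, so ``no deletion'' forces $N_i+M_{i+1}=M_i$, and the downward induction $N_{i+1}=M_{i+1}\Rightarrow M_i=N_i+N_{i+1}=N_i$ gives $N=M$. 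Where you deserve particular credit is the converse: as literally stated in the paper (``any ring $R$,'' $M$ Artinian implies a finite composition series) the claim is \emph{false} --- the Pr\"ufer module $\Z[1/p]/\Z$ over $\Z$ is Artinian, has no maximal proper submodule, and admits no composition series --- and your proof correctly isolates the missing ingredient, invoking ACC to produce a maximal proper submodule at each step while DCC forces the strictly descending chain to halt, necessarily at $0$. That extra hypothesis is automatic in the paper's standing context (finitely generated modules over a Noetherian, indeed Artinian local, ring, Artinian rings being Noetherian by the theorem quoted just before), and it is exactly what Eisenbud's actual statement builds in, where existence of a composition series is equivalent to the conjunction of ACC and DCC rather than to DCC alone. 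So: correct proof, standard approach, and a sharper statement than the one in the paper.
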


 In particular, over local rings we have $M_i/M_{i+1}\cong R/\m$ for all $i$. Note that over an Artinian local ring,
 the chain $\m^iM\supset\m^{i+1}M\supset\cdots$ eventually terminates. That is to say, $\m^n M=\m^{n+1}M$, hence by 
 Lemma \ref{nakayama}, $\m^n M/\m^{n+1} M=0$ implies that $\m^n M=0$. Therefore, every composition series is 
 equivalent to a refinement of the chain:
 $$M\supset \m M \supset\cdots\supset \m^n M=0.$$
 Therefore, we can compute length by computing the sum of lengths of each term in the sequence above. Recall that by
 Lemma \ref{nakayama}, $\m^i M$ and $R/\m$-vector space $\m^i M/\m^{i+1}M$ have the same length and number of
 generators. Moreover, the length and number of basis elements of a vector space are equal, so the length of $\m^iM$
 is the size of its minimal generating set.

\begin{corollary}[{\cite[Corollary 2.17]{de}}]
  Let $R$ be a Noetherian ring and $M$ be a finite $R$-module. The following conditions are equivalent:
  \begin{itemize}
    \item $M$ has finite length.
    \item All the primes that contain the annihilator of $M$ are maximal.
    \item $R/\ann(M)$ is Artinian.
    \end{itemize}
  \end{corollary}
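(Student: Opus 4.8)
The plan is to route everything through the ring $S := R/\ann(M)$, using the equivalence of (2) and (3) as a bridge. The first and cleanest step is (2) $\Leftrightarrow$ (3). Since $R$ is Noetherian, so is its quotient $S$, and the prime ideals of $S$ correspond to the prime ideals of $R$ containing $\ann(M)$, with a prime of $S$ maximal exactly when the corresponding prime of $R$ is maximal. Thus the condition ``every prime containing $\ann(M)$ is maximal'' is verbatim the condition ``every prime of $S$ is maximal.'' Because $S$ is Noetherian, the preceding characterization of Artinian rings says $S$ is Artinian if and only if all its primes are maximal, giving (2) $\Leftrightarrow$ (3) at once.

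For (3) $\Rightarrow$ (1) I would observe that $\ann(M)$ kills $M$, so the $R$-submodules of $M$ coincide with the $S$-submodules and the length of $M$ over $R$ equals its length over $S$. If $S$ is Artinian then $S$ has finite length as a module over itself, hence so does every finite $S$-module: $M$ is a quotient of some $S^n$, and $\length(S^n) = n\,\length(S) < \infty$ forces $\length_S(M) < \infty$. Transporting back, $M$ has finite length as an $R$-module.

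The substance of the argument is (1) $\Rightarrow$ (3), which I would prove by establishing the more concrete statement that $\ann(M)$ contains a product of maximal ideals. A finite-length module $M$ has a composition series $M = M_0 \supset M_1 \supset \cdots \supset M_\ell = 0$, and over a commutative ring each simple quotient $M_{i-1}/M_i$ is isomorphic to $R/\m_i$ for some maximal ideal $\m_i$, so $\m_i$ annihilates $M_{i-1}/M_i$, i.e.\ $\m_i M_{i-1} \subseteq M_i$. Descending the series then gives $\m_1\m_2\cdots\m_\ell\, M \subseteq M_\ell = 0$, so $\m_1\cdots\m_\ell \subseteq \ann(M)$. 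Hence every prime containing $\ann(M)$ contains the product $\m_1\cdots\m_\ell$, so it contains some $\m_i$ and therefore equals $\m_i$; in particular every such prime is maximal, which is (2), and (2) $\Leftrightarrow$ (3) closes the cycle. I expect the only delicate point to be the bookkeeping in this last step: correctly identifying the composition factors as residue fields at maximal ideals and verifying that the ordered product of the associated maximal ideals lands inside the annihilator. Once that is set up the remaining implications are formal.
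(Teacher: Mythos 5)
Your argument is correct and complete, but be aware that the paper itself gives no proof here at all --- the corollary is stated with a bare citation to \cite[Corollary~2.17]{de} --- so the meaningful comparison is with Eisenbud's proof in the source. There, the substantive implication (finite length $\Rightarrow$ $R/\ann(M)$ Artinian) is dispatched by a single embedding: if $m_1,\dots,m_n$ generate $M$, the map $r\mapsto(rm_1,\dots,rm_n)$ has kernel exactly $\ann(M)$, so $R/\ann(M)\hookrightarrow M^{\oplus n}$ and inherits finite length directly, whence it is Artinian by the theorem quoted just before the corollary. Your route is genuinely different: from a composition series you extract the stronger intermediate fact that a finite product of maximal ideals $\m_1\cdots\m_\ell$ lies inside $\ann(M)$, and then use primality (a prime containing the product must contain, hence equal, some $\m_i$) to get condition (2). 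Both proofs then close the cycle the same way, via the prime correspondence for $R/\ann(M)$ together with the Noetherian characterization of Artinian rings, and your (3) $\Rightarrow$ (1) step (length over $R$ equals length over $R/\ann(M)$, and a finite module over an Artinian ring is a quotient of a finite free module of finite length) is the standard one. What your version buys is the explicit product of maximal ideals in the annihilator, a fact of independent use (it is the engine behind the structure theory of Artinian rings); what the embedding buys is brevity, avoiding exactly the composition-factor bookkeeping you flag as the delicate point, since no identification of the simple quotients as residue fields is needed. The one detail worth stating cleanly if you write this up: the descending containments $\m_i M_{i-1}\subseteq M_i$ chain together in either order since $R$ is commutative, so the ``ordered product'' worry is harmless.
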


 Lastly:

\begin{corollary}[{\cite[Corollary 2.18]{de}}]\label{cor:mengyuan}
  Let $R$ and $M$ be as above. Suppose $I=\ann(M)$ and $\p$ is a prime of $R$ containing $I$.
  Then $M_\p=M\otimes_R R_\p$ has finite length if and only if $\p$ is minimal among primes containing $I$.
  \end{corollary}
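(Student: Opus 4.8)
The plan is to deduce this from the previous corollary (Corollary 2.17 in the excerpt) applied to the module $M_\p$ over the ring $R_\p$, rather than to $M$ over $R$. That corollary characterizes finite length in terms of the annihilator: a finite module over a Noetherian ring has finite length exactly when every prime containing its annihilator is maximal. So the first task is to identify $\ann_{R_\p}(M_\p)$, and the second is to translate the resulting maximality condition back to $R$ using the prime correspondence already recorded for localizations.

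First I would note that $R_\p$ is Noetherian and $M_\p$ is a finite $R_\p$-module, so that Corollary 2.17 applies in the localized setting. Then I would show $\ann_{R_\p}(M_\p) = I_\p$. The containment $I_\p \subseteq \ann_{R_\p}(M_\p)$ is immediate; for the reverse I would use that $M$ is finitely generated, say by $m_1,\dots,m_k$. Given $a/s$ killing every $m_i/1$, I choose $t_i \in R\setminus\p$ with $t_i a m_i = 0$ in $M$, set $t=\prod_i t_i$, and observe that $ta \in \ann_R(M) = I$, so that $a/s = (ta)/(ts) \in I_\p$. This finiteness argument is the one place where a genuine computation is required.

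With $\ann_{R_\p}(M_\p) = I_\p$ in hand, Corollary 2.17 says $M_\p$ has finite length precisely when every prime of $R_\p$ containing $I_\p$ is maximal. Since $R_\p$ is quasi-local with unique maximal ideal $\p R_\p$, this condition says exactly that $\p R_\p$ is the only prime of $R_\p$ containing $I_\p$. Finally I would invoke the order-preserving bijection between $\spec R_\p$ and the set of primes of $R$ contained in $\p$ (the same correspondence used to show that $(R_\p,\p R_\p)$ is quasi-local): under it, primes of $R_\p$ containing $I_\p$ correspond to primes $\q$ of $R$ with $I\subseteq\q\subseteq\p$. Thus ``$\p R_\p$ is the only such prime'' translates into ``$\p$ is the only prime of $R$ with $I\subseteq\q\subseteq\p$,'' which is exactly the assertion that $\p$ is minimal among the primes containing $I$.

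The bulk of the argument is bookkeeping with this prime correspondence, and the only real obstacle is the annihilator identity $\ann_{R_\p}(M_\p) = I_\p$, which genuinely uses the finite generation of $M$ and can fail without it. Everything else is a direct transfer of the preceding corollary through the flatness and localization machinery already developed.
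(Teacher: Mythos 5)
Your proof is correct, and there is in fact no in-paper argument to compare it against: the paper states this corollary with only a citation to \cite[Corollary 2.18]{de} and supplies no proof of its own. Your route---apply the preceding corollary to $M_\p$ over the Noetherian local ring $R_\p$, justify $\ann_{R_\p}(M_\p)=I_\p$ using the finite generation of $M$ (correctly flagged as the one step needing a real computation), and translate ``$\p R_\p$ is the unique prime over $I_\p$'' through the prime correspondence for $\spec R_\p$ into minimality of $\p$ over $I$---is precisely the standard derivation, and matches the one in the cited source.
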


 In particular, when $M=R/I$, we conclude that $M_\p=R_\p/I_\p$ is Artinian if and only if $\p$ is the minimal prime 
 over $I$.

\begin{definition} Let $(R, \m)$ be a local ring and let $M$ be a finitely generated $R$-module.

  Ideal $\q\subset\m$ is a {\bf parameter ideal for $M$} if $M/\q M$ has finite length.
  Equivalently, $\q$ is a parameter ideal for $M$ if: $\text{rad}(\text{ann}(M/\q M))=\m.$

  A {\bf system of parameters} is a sequence $(x_1,\dots,x_d)\subset\m$ such that $\m^n\subset(x_1,\dots,x_d)$
  for $n>>0$.
  \end{definition}

 Geometrically, for local ring of a variety $X$ at $P$, a system of parameters is a local coordinate system for $X$
 around $P$.

%Hilbert-Samuel:
%See \cite[V Ex. 3.4. p. 394]{hartshorne}    vs    \cite[\S 12.2 p. 270]{de} \FIXME

\begin{definition}\label{def:HS}
  Let $R$ and $M$ be as above and let $\q$ be a parameter ideal for $M$.

  The {\bf Hilbert-Samuel function of $M$ with respect to parameter ideal $\q$} is:
  $$H_{\q,M}(n):=\length(\q^nM / \q^{n+1}M).$$
  Note that $\q^nM / \q^{n+1}M$ is a module over the Artinian ring $R/(\q+\text{ann}(M))$, hence the length is 
  finite.
  \end{definition}

\begin{corollary} Let $R$ and $M$ be as above, then $\m$ is a parameter ideal for $M$. In particular:
  $$\length_R M = \sum_{i=0}^n \length_R(\m^iM/\m^{i+1}M) = \sum_{i=0}^n H_{\m,M}(i).$$
  \end{corollary}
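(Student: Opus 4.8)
The plan is to dispatch the three assertions in order, leaning on the filtration analysis carried out just before Definition \ref{def:HS}. First I would check that $\m$ is a parameter ideal for $M$, which by definition asks that $M/\m M$ have finite length. Since $\m$ annihilates $M/\m M$, this quotient is a module over the residue field $R/\m$, i.e.\ an $R/\m$-vector space; as $M$ is finitely generated, Nakayama's Lemma (Lemma \ref{nakayama}) makes $M/\m M$ finite-dimensional, and a finite-dimensional vector space has length equal to its dimension. Equivalently, $\m \subseteq \text{ann}(M/\m M)$ together with the maximality of $\m$ forces $\text{rad}(\text{ann}(M/\m M)) = \m$, which is exactly the radical criterion in the definition of a parameter ideal.

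The rightmost equality is then immediate from Definition \ref{def:HS}, which sets $H_{\m,M}(i) = \length_R(\m^i M/\m^{i+1}M)$; the two sums agree term by term. The only assertion carrying genuine content is the leftmost equality. Finiteness of $\length_R M$ forces the descending chain $M \supseteq \m M \supseteq \m^2 M \supseteq \cdots$ to stabilize, after which Nakayama's Lemma yields $\m^N M = 0$ for some $N$, exactly as observed in the discussion above. This produces a finite filtration
$$M = \m^0 M \supseteq \m^1 M \supseteq \cdots \supseteq \m^N M = 0$$
whose successive quotients are the modules $\m^i M/\m^{i+1}M$.

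I would then invoke additivity of length along the short exact sequences $0 \to \m^{i+1}M \to \m^i M \to \m^i M/\m^{i+1}M \to 0$---legitimate because every composition series of a finite-length module has the same length, \cite[Theorem 2.13]{de}---and telescope to obtain
$$\length_R M = \sum_{i=0}^{N-1}\length_R(\m^i M/\m^{i+1}M).$$
For any $n \geq N-1$ the extra terms vanish, since $\m^i M = 0$ gives $\m^i M/\m^{i+1}M = 0$, so the sum may be extended up to the index $n$ appearing in the statement.

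I expect no real obstacle: both the termination of the filtration (via Nakayama) and the additivity of length (via the Jordan--H\"older theorem) are already in place in the surrounding text, so the argument is mostly bookkeeping. The one point needing care is the standing finiteness of $\length_R M$, which holds precisely when $R/\ann(M)$ is Artinian; this is the hypothesis in force throughout this section and is what guarantees both that the filtration is genuinely finite and that each summand is itself finite.
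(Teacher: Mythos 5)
Your proof is correct and is essentially the paper's own argument: the paper justifies this corollary through the discussion immediately preceding it, where the chain $M\supset\m M\supset\cdots\supset\m^nM=0$ is shown to terminate via Nakayama's Lemma and every composition series is observed to refine this filtration, which is your telescoping of the short exact sequences $0\to\m^{i+1}M\to\m^iM\to\m^iM/\m^{i+1}M\to0$ in different clothing. The only cosmetic difference is that the paper derives termination of the filtration from the Artinian hypothesis (the chain stabilizes, then Nakayama gives $\m^nM=0$) while you derive it from finiteness of $\length_R M$; both hypotheses are in force in this section, and your closing remark correctly identifies finite length as the point where the statement genuinely needs it.
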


%\subsection{Multiplicity}

%Poincar\'e series: $$P^R_M(t)=\sum_{n\geq 0} b^R_n(M)$$
%Hilbert function:
%$$H_M(n):=\dim_{R/\m} \m^nM / \m^{n+1}M.$$
%$M$ and $gr_\m M$ have the same Hilbert function
%$$gr_\q M=M/\q M\oplus \q M/\q^2 M\oplus\dots$$
%$R$ local ring $dim R=1+\deg P_R$

%\TODO: Artin-Rees lemma?

%\subsection{Rees Algebras}
%\renewcommand{\R}{\mathcal{R}}
%\begin{definition} Let $R$ be a ring and $I\subset R$ and idea.
%  The {\bf Rees algebra of I} is the graded algebra:
%  $$\R(I, S) := S\oplus I \oplus I^2 \oplus \dots = S[Jt] \subset S[t].$$
%  \end{definition}

%%%%%%%%%%%%%%%%%%%%%%%%%%%%%%%%%%%%%%%%%%%%%%%%%%%%%%%%%%%%%%%%%%%%%%%%%%%%%%%%%%%%%%%%%%%%%%%%%%%%%%%%%%%%%%%%%%%%
%%%%%%%%%%%%%%%%%%%%%%%%%%%%%%%%%%%%%%%%%%%%%%%%%%%%%%%%%%%%%%%%%%%%%%%%%%%%%%%%%%%%%%%%%%%%%%%%%%%%%%%%%%%%%%%%%%%%
%%%%%%%%%%%%%%%%%%%%%%%%%%%%%%%%%%%%%%%%%%%%%%%%%%%%%%%%%%%%%%%%%%%%%%%%%%%%%%%%%%%%%%%%%%%%%%%%%%%%%%%%%%%%%%%%%%%%

\section{Elementary Computations}

 The procedures in this section are implemented in \MM language and are available across three packages: 
 {\tt Localization}, {\tt LocalRings}, and {\tt PruneComplex} \cite{github}. In order to run the examples, it 
 suffices to load {\tt LocalRings} as follows:

\begin{verbatim}
Macaulay2, version 1.9.2
i1 : needsPackage "LocalRings"
  \end{verbatim}

 The first step in performing computations over local rings is defining a proper data structure that permits
 defining the usual objects of study, such as ideals, modules, and complexes over local rings. We have implemented 
 localization of polynomial rings with respect to prime ideals as a limited type of the field of fractions in 
 {\tt Localization}:

\begin{verbatim}
i2 : R = ZZ/32003[x,y,z];
i3 : P = ideal"x,y,z";
i4 : RP = localRing(R, P)
o4 : LocalRing
  \end{verbatim}

 Let $M_\p$ be a module over the local ring $R_\p$. Our first computation is to find a free resolution of $M_\p$.

\begin{proposition} Let $R_\p$ and $M_\p$ be as above.
  Suppose we have a module $N$ over the parent ring $R$ such that $N\otimes_R R_\p=M_\p$ and suppose $C_\b$ is a
  free resolution of $N$. Then $C_\b\otimes_R R_\p$ is a free resolution for $M_\p$.
  \end{proposition}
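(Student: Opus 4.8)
The plan is to leverage the flatness of $R_\p$ as an $R$-module, established in Lemma \ref{flat}. By definition a free resolution $C_\b:\cdots\xto{\d}C_1\xto{\d}C_0\to N\to 0$ is an acyclic complex of free $R$-modules, meaning it is exact at every spot: $\im\d_{i+1}=\ker\d_i$ for $i\geq 1$, and the augmentation $C_0\to N$ is surjective with kernel $\im\d_1$. Two things must be checked after applying the functor $-\otimes_R R_\p$: that the resulting modules are free over $R_\p$, and that the resulting complex is still acyclic with augmentation module $M_\p$.

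Freeness is immediate. Each $C_i$ is free, say $C_i\cong R^{r_i}$, and since tensor product commutes with direct sums we obtain $C_i\otimes_R R_\p\cong R_\p^{r_i}$, a free $R_\p$-module. Likewise $N\otimes_R R_\p=M_\p$ holds by hypothesis, so the augmentation target is correct.

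The heart of the argument is exactness, and here the only subtlety is that the definition of flatness guarantees the preservation of \emph{short} exact sequences, whereas a resolution is a long exact sequence. To bridge this gap I would decompose the resolution into short exact sequences via its images. Writing $B_{-1}=N$ and $B_i=\im(\d_{i+1}\colon C_{i+1}\to C_i)=\ker\d_i$ for $i\geq 0$, exactness of $C_\b$ is equivalent to the collection of short exact sequences $0\to B_i\to C_i\to B_{i-1}\to 0$. Applying the exact functor $-\otimes_R R_\p$ to each yields short exact sequences $0\to B_i\otimes_R R_\p\to C_i\otimes_R R_\p\to B_{i-1}\otimes_R R_\p\to 0$, which splice back together into a complex that is exact at every spot. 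Identifying $B_{-1}\otimes_R R_\p=M_\p$, this exhibits $C_\b\otimes_R R_\p$ as an acyclic complex of free $R_\p$-modules augmented over $M_\p$, that is, a free resolution of $M_\p$.

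The main obstacle is thus conceptual rather than computational: one must pass from the short-exact-sequence formulation of flatness to the preservation of exactness of a (possibly unbounded) complex. The splicing argument above handles this cleanly; equivalently, one could phrase the same fact as the vanishing of the higher groups $\Tor_i^R(R_\p,-)$ for the flat module $R_\p$, which implies that tensoring a resolution with $R_\p$ introduces no new homology.
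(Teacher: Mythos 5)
Your proof is correct and takes essentially the same route as the paper's: invoke Lemma \ref{flat} to see that $-\otimes_R R_\p$ preserves exactness of the resolution, and note that tensoring a free $R$-module with $R_\p$ gives a free $R_\p$-module, so $C_\b\otimes_R R_\p$ resolves $N\otimes_R R_\p = M_\p$. The only difference is that you spell out the splicing of the long exact sequence into short exact sequences to justify that flatness (stated for short exact sequences) preserves acyclicity of the whole complex --- a detail the paper's proof leaves implicit, not a different argument.
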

\begin{proof}
  Recall from Lemma \ref{flat} that $R_\p$ is a flat module over $R$, hence the the sequence $C_\b\otimes_R R_\p$
  remains exact and is a resolution for $N\otimes_R R_\p=M_\p$. Moreover, if $F$ is a free $R$-module then
  $F\otimes_R R_\p$ is a free $R_\p$-module. Therefore $C_\b\otimes_R R_\p$ is a free resolution for $M_\p$
  \qed
  \end{proof}

 Fortunately, efficient procedures for finding free resolutions for homogeneous and non-homogeneous modules over 
 polynomial rings using Gr\"obner basis methods already exist. Therefore, one way to find a free resolution of 
 $M_\p$ is to begin with finding a suitable module $N$, then find a free resolution $C_\b$ for $N$ and tensor it 
 with $R_\p$ to get a free resolution for $M_\p$%
 \footnote{Note that this methods would also apply to localization with respect to a multiplicative closed set.}.
 Therefore, we need to show that a suitable module $N$ can be efficiently constructed.

\begin{remark} 
  Finitely generated modules can be defined in \MM in four different ways:
  \begin{itemize}
    \item Free module: $F=R^n$.
    \item Submodule of a free module; such modules arise as the image of a map of free modules:
      $$R^m \xto{\begin{bmatrix}g_1 & g_2 & \cdots & g_m \end{bmatrix}} G\to 0.$$
      The matrix of generators $\begin{bmatrix}g_1 & g_2 & \cdots & g_m \end{bmatrix}$ captures the data in $G$.
    \item Quotient of a free module; such modules arise as the cokernel of a map of free modules:
      $$R^m \xto{\begin{bmatrix}h_1 & h_2 & \cdots & h_m \end{bmatrix}} R^n \to R^n/H\to 0.$$
      A matrix of relations $\begin{bmatrix}h_1 & h_2 & \cdots & h_m \end{bmatrix}$ captures the data in $R^n/H$.
    \item Subquotient modules; given two maps of free modules $g:R^m\to R^{m+n}$ with $G=\im(g)$ and 
      $h:R^n\to R^{m+n}$ with $H=\im(h)$, the subquotient module with generators of $g$ and relations of $h$ is 
      given by $(H+G)/H$. That is to say:
      $$R^m \xto{\begin{bmatrix} h_1 & h_2 &\cdots& h_m                          \end{bmatrix}}R^{m+n} 
            \xto{\begin{bmatrix} h_1 & h_2 &\cdots& h_m & g_1 & g_2 &\cdots& g_n \end{bmatrix}}\frac{H+G}{H}\to 0.$$
      Two matrices, one for relations in $H$ and one for generators of $G$ are needed to store $(G+H)/H$.
    \end{itemize}
  Note that the class of subquotient modules contains free modules and is closed under the operations of taking 
  submodules and quotients. In particular, submodules are subquotient modules with relations module $H=0$ and 
  quotients modules are subquotient modules with generator module $G=R^n$.
  \end{remark}

\begin{proposition}\label{lift}
  Using the notation above, consider a $R_\p$-module $M_\p$
  with generator module $G_\p$ and relation module $H_\p$, i.e. $M_\p=(G_\p+H_\p)/H_\p$,
  such that $\tfrac{g_1}{u_1},\dots,\tfrac{g_n}{u_n}$ generate $G_\p\subset R^r_\p$
  and       $\tfrac{h_1}{v_1},\dots,\tfrac{h_m}{v_m}$ generate $H_\p\subset R^r_\p$ 
  where     $h_i,g_i\in R_\p^r$ and $u_i,v_i\in R_\p^*=R\setminus\p$.
  Let $G$ and $H$ be the $R$-modules generated by the $g_i$ and $h_i$, respectively, and let $N=(G+H)/H$ be the
  $R$-module with generator module $G$ and relation module $H$. Then $N \otimes_R R_\p = M_\p$.
  \end{proposition}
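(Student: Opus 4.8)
The plan is to realize every module in sight as a sub- or subquotient-module of the common ambient free module $R_\p^r = R^r \otimes_R R_\p$, and then to deduce the claim from exactness of localization (Lemma~\ref{flat}). The only genuine inputs are that localization commutes with forming finitely generated submodules of a free module and with taking quotients, both consequences of flatness.

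First I would observe that the numerators may be taken in $R^r$: any generator of $G_\p$ or $H_\p$ has the form $\tfrac{g}{u}$ with $g \in R^r$ and $u \in R\setminus\p$, so $G$ and $H$ are well defined $R$-submodules of $R^r$. Tensoring the inclusion $G \hookrightarrow R^r$ with the flat module $R_\p$ keeps the map injective, so $G \otimes_R R_\p$ is identified with the $R_\p$-submodule of $R_\p^r$ generated by the images $\tfrac{g_i}{1}$. The key point is that each $u_i \in R\setminus\p$ is a unit in $R_\p$, whence $\tfrac{g_i}{u_i} = \tfrac{1}{u_i}\cdot\tfrac{g_i}{1}$; since multiplying generators by units leaves the generated submodule unchanged, the submodule generated by the $\tfrac{g_i}{u_i}$ equals $G \otimes_R R_\p$. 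By hypothesis this submodule is $G_\p$, so $G_\p = G \otimes_R R_\p$, and identically $H_\p = H \otimes_R R_\p$. Because localization commutes with sums of submodules, $(G+H)\otimes_R R_\p = G_\p + H_\p$ inside $R_\p^r$.

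With these identifications, I would apply flatness to the short exact sequence of $R$-modules $0 \to H \to G+H \to N \to 0$, where $N = (G+H)/H$. Tensoring with $R_\p$ preserves exactness, giving
$$0 \to H\otimes_R R_\p \to (G+H)\otimes_R R_\p \to N\otimes_R R_\p \to 0,$$
and reading off the cokernel yields
$$N\otimes_R R_\p = \frac{(G+H)\otimes_R R_\p}{H\otimes_R R_\p} = \frac{G_\p + H_\p}{H_\p} = M_\p.$$
I expect the main obstacle to be purely the bookkeeping of the first step: one must check that flatness genuinely embeds $G\otimes_R R_\p$ as a submodule of $R_\p^r$ rather than merely mapping it there, and that replacing the generators $\tfrac{g_i}{1}$ by $\tfrac{g_i}{u_i}$ does not alter the submodule they span. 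Once these identifications are in place, the conclusion is a formal consequence of exactness.
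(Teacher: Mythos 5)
Your proof is correct and follows essentially the same route as the paper's: tensor the short exact sequence $0\to H\to G+H\to N\to 0$ with the flat module $R_\p$ and identify $G\otimes_R R_\p=G_\p$ (likewise $H\otimes_R R_\p=H_\p$) inside the common ambient $R_\p^r$, using that the denominators $u_i,v_i$ are units in $R_\p$. The only divergence is minor: the paper closes the identification $G\otimes_R R_\p=G_\p$ by an appeal to Nakayama's lemma (Corollary~\ref{cor:nak}), whereas you obtain it directly from flatness (injectivity of $G\otimes_R R_\p\to R_\p^r$) plus the observation that multiplying generators by units does not change the submodule they span---a slightly more direct justification of the same step.
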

\begin{proof}
  Consider the exact sequence of $R$-modules:
  $$0\to H\to H+G\to \frac{H+G}{H}\to 0.$$
  By Lemma \ref{flat} $R_\p$ is a flat $R$-module, so we have an exact sequence of $R_\p$-modules:
  $$0\to H\otimes_R R_\p\to (H+G)\otimes_R R_\p\to \Big(\frac{H+G}{H}\Big)\otimes_R R_\p\to 0.$$
  Hence:
  $$N\otimes_R R_\p = \Big(\frac{H+G}{H}\Big)\otimes_R R_\p
                    = \frac{(H+G)\otimes_R R_\p}{H\otimes_R R_p} 
                    = \frac{H\otimes_R R_\p + G\otimes_R R_\p}{H\otimes_R R_p} $$
  Note that both $G_\p$ and $H_\p$ are submodules of free module $R_\p^r$, therefore it suffices to show 
  $G\otimes_R R_\p=G_\p$ for any submodule of a free module. Observe that without loss of generality $G_\p$ can be 
  generated by $g_i$, as $u_i\in R_\p^*$.
  Note that $\m G_\p + G \otimes_R R_\p = \sum \m g_i + \sum R_\p g_i = \sum R_\p g_i = G_\p$.
  Therefore $G_\p = G \otimes_R R_\p$ and $H_\p = H \otimes_R R_\p$ by Corollary \ref{cor:nak}.
  \qed
  \end{proof}

%\TODO find a counter example with primary decomposition? embedded components?
 
 Finding the module $M$ is done using the ${\tt liftUp}$ procedure in {\tt LocalRings}.

\begin{procedure}[{\tt liftUp}]
  Given a matrix $M_\p$ over local ring $R_\p$, returns a matrix $M$ over $R$ such that $M_\p=R_\p\otimes_R M$.
  This is done by multiplying each column of $M_\p$ with the least common denominator of elements in that column,
  then formally lifting up the matrix to $R$.
\begin{verbatim}
Input:  matrix MP over RP
Output: matrix M  over R
Begin
    for i from 1 to number of columns of MP do
      col <- i-th column of MP
      d   <- LCM of denominators of elements of col
      MP  <- MP with the i-th column multiplied by d
    M <- formally lift MP from RP to R
    RETURN M
End
  \end{verbatim}
  As shown above, this is enough to lift any module. Given a matrix $M_\p$ over local ring $R_\p$, we can find maps
  $g:R_\p^m\to R_\p^r$ and $h:R_\p^n\to R_\p^r$ such that $G_\p=\im(g)$ is the relation module of $M_\p$ and
  $H_\p=\im(h)$ is the generator module of $M_\p$. The matrix representing the maps is found using \MM commands 
  {\tt generators} and {\tt relations}. Then the matrices $g$ and $h$ can be lift up, which then give $G$ and $H$ as
  their images. After that we have the lift up as $M=(G+H)/H$. In \MM, the command {\tt subquotient} returns the
  subquotient module given the matrices $g$ and $h$.
\begin{verbatim}
Input:  module MP over RP
Output: module M  over R
Begin
    g <- generators MP
    g <- liftUp g
    h <- relations MP
    h <- liftUp h
    M <- subquotient(g, h)
    RETURN M
End
  \end{verbatim}
  \end{procedure}

\begin{example} Computing free resolution of a Gorenstein ideal of projective dimension 2 with 5 generators
  \cite[Example 5, pp. 127]{EB}:
\begin{verbatim}
i5 : use RP;
i6 : IP = ideal(x^3 + y^3, x^3 + z^3, x*y/(z+1), x*z/(y+1), y*z/(x+1))
             3    3   3    3   x*y    x*z    y*z
o6 = ideal (x  + y , x  + z , -----, -----, -----)
                              z + 1  y + 1  x + 1
o6 : Ideal of RP
i7 : I = liftUp IP
             3    3   3    3
o7 = ideal (x  + y , x  + z , x*y, x*z, y*z)
o7 : Ideal of R
i8 : C = res I
      1      5      5      1
o8 = R  <-- R  <-- R  <-- R  <-- 0
     0      1      2      3      4
i9 : C ** RP
       1       5       5       1
o9 = RP  <-- RP  <-- RP  <-- RP  <-- 0
     0       1       2       3       4
  \end{verbatim}
  \end{example}

\begin{caution}
  A priori there is no reason to believe that this resolution is minimal.
  \end{caution}

 Recall from Theorem \ref{thm:syz} that a free resolution $F_\b$ is minimal when there are no units in any of the 
 differentials $\d:F_i\to F_{i-1}$. Therefore we can minimize resolutions by iteratively removing all units from the
 differentials while preserving the complex.

\begin{proposition} 
  Using the notation as before, suppose we have a free resolution for $M$:
  \begin{cd}
    \cdots\ar[r] \& F_{i+1} \ar[r,"\d_{i+1}"] \& F_i \ar[r, "\d_i"] \& F_{i-1} \ar[r,"\d_{i-1}"] \& F_{i-2} \ar[r] \&
    \cdots\ar[r] \& F_0     \ar[r]            \& M   \ar[r]         \& 0.
    \end{cd}
  If the matrix of $\d_i$ contains a unit, we can construct a free resolution $\tilde F_\b$ for $M$ such that 
  $F_i\cong\tilde F_i\oplus R$ and $F_{i-1}\cong\tilde F_{i-1}\oplus R$.
  \end{proposition}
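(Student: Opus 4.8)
Given a free resolution with a differential $\d_i$ whose matrix contains a unit entry, we must produce a new free resolution $\tilde F_\b$ of the same module $M$ with $F_i \cong \tilde F_i \oplus R$ and $F_{i-1}\cong \tilde F_{i-1}\oplus R$.

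**The key idea.** A unit entry in the matrix of $\d_i$ means the map $\d_i\colon F_i \to F_{i-1}$ carries some basis vector $e$ of $F_i$ to an element whose component along some basis vector $e'$ of $F_{i-1}$ is a unit $u$. This is precisely a split injection on the rank-one summand $R\cdot e$, and the plan is to exploit this to cancel a free summand of rank one from each of $F_i$ and $F_{i-1}$ via a change of basis.

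**The plan.** First I would reduce to a normal form by change of basis. Using Corollary \ref{cor:nak} (3), which lets us apply invertible matrices over $R$ to the generating sets of the free modules, I would choose bases of $F_i$ and $F_{i-1}$ so that the unit sits in the top-left corner of the matrix of $\d_i$. Concretely, after permuting basis elements the $(1,1)$-entry of $\d_i$ is a unit $u$; then using column operations (invertible on $F_i$) and row operations (invertible on $F_{i-1}$) I would clear the rest of the first row and first column, so that in the new bases
\begin{equation*}
  \d_i = \begin{bmatrix} u & 0 \\ 0 & \d_i' \end{bmatrix},
\end{equation*}
where $u$ is the isolated unit and $\d_i'$ is the matrix of a map $\tilde F_i \to \tilde F_{i-1}$ with $F_i \cong R \oplus \tilde F_i$ and $F_{i-1}\cong R\oplus \tilde F_{i-1}$. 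The subtlety here is that clearing the first row requires subtracting multiples of the first column from the others, and these operations alter the matrices of the neighboring differentials $\d_{i+1}$ and $\d_{i-1}$; I must track those changes, since a basis change on $F_i$ must be matched by the inverse change on the source side of $\d_{i+1}$, and likewise for $F_{i-1}$ and $\d_{i-1}$.

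Second, I would define $\tilde F_\b$ by replacing $F_i$ with $\tilde F_i$ and $F_{i-1}$ with $\tilde F_{i-1}$, keeping all other modules, and taking the new differentials to be $\d_i'$ together with the appropriately modified maps into and out of the truncated positions. The cancellation $\begin{bmatrix} u & 0 \\ 0 & \d_i'\end{bmatrix}$ with $u$ a unit is the classical splitting of a trivial complex $R \xto{u} R$ off of $F_\b$; one checks that the new maps satisfy $\d_{i-1}'\d_i' = 0$ and $\d_i'\d_{i+1}' = 0$, so $\tilde F_\b$ is again a complex of free modules.

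**The main obstacle and how I would finish.** The crux is verifying that $\tilde F_\b$ remains \emph{exact} and still resolves $M$. I expect this to be the heart of the argument, and the cleanest route is to recognize that $F_\b \cong \tilde F_\b \oplus T$, where $T$ is the two-term complex $\cdots \to 0 \to R \xto{\ u\ } R \to 0 \to \cdots$ placed in homological degrees $i$ and $i-1$. Since $u$ is a unit, $T$ is exact (indeed contractible), so its homology vanishes in every degree. Because homology is additive over direct sums of complexes, the homology of $\tilde F_\b$ equals that of $F_\b$ in every degree: it is $M$ in degree $0$ and $0$ elsewhere. This shows $\tilde F_\b$ is a free resolution of $M$ with the asserted summand relations $F_i \cong \tilde F_i \oplus R$ and $F_{i-1}\cong \tilde F_{i-1}\oplus R$, completing the proof. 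The bookkeeping of how the change of basis propagates to $\d_{i+1}$ and $\d_{i-1}$ is routine once the splitting $F_\b \cong \tilde F_\b \oplus T$ is set up, so I would state it as a direct-sum decomposition rather than grinding through the entry-wise matrix updates.
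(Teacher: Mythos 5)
Your proposal is correct and follows essentially the same route as the paper's proof: isolate the unit in a corner of $\d_i$ by invertible row and column operations (propagating the basis changes to $\d_{i+1}$ and $\d_{i-1}$), then split off the contractible two-term complex $R\xto{u}R$ so that $F_\b\cong\tilde F_\b\oplus T$. Your explicit appeal to additivity of homology to conclude that $\tilde F_\b$ still resolves $M$ is in fact slightly more careful than the paper, which simply asserts the direct-sum decomposition into two exact sequences.
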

\begin{proof}
  Suppose $F_i\cong R^{\oplus r_i}$ and that for each $i$ we can represent $\d_i$ as a $r_{i-1}\times r_{i}$ matrix 
  with entries in $R$. Then $M(\d_i)$ can be written as:
  \[ M(\d_i) =
   \begin{bmatrix}
     a_{1,1}       & \dots   & a_{1,r_i}       \\
     \vdots        & a_{m,n} & \vdots          \\
     a_{r_{i-1},1} & \dots   & a_{r_{i-1},r_i} \\
     \end{bmatrix}.
  \]
  such that the element $u=a_{m,n}$ is a unit. Note that if $S$ and $T$ are change of coordinate matrices for $F_i$
  and $F_{i-1}$ respectively, then without loss of generality we can consider the free resolution:
  \begin{cd}
    \cdots\ar[r] \& F_{i+1} \ar[rr,"\d_{i+1}S^{-1}"] \&\& F_i \ar[rr, "S\d_iT"] \&\& F_{i-1} \ar[rr,"T^{-1}\d_{i-1}"] 
    \&\& F_{i-2} \ar[r] \&
    \cdots\ar[r] \& F_0     \ar[r]                  \& M   \ar[r]           \& 0.
    \end{cd}
  In particular, we can find invertible matrices $S$ and $T$ such that $S\d_iT$ has $u$ in the top left corner of 
  the matrix. This can be done by swapping the $n$-th and first columns of $M(\d_i)$ while swapping $n$-th and first
  rows of $M(\d_{i+1})$, and similarly swapping the $m$-th and first rows of $M(\d_i)$ while swapping $m$-th and 
  first columns of $M(\d_{i-1})$.

  Now that $u=a_{1,1}$, let $c_j$ denote the $j$-th column. For each $1<j\leq r_i$, we can use changes of coordinate
  to add $-a_{1, j}c_1/u$ to the column $c_j$ without losing exactness. As a result, the top row of the matrix is 
  now all zero except for $u$ in the top right corner. Similarly, denoting the $j$-th row by $v_j$, we can zero out 
  the first column (with the exception of $u$) by adding $-a_{j, 1}v_1/u$ to the $j$-th row for $1<j\leq r_{i-1}$.
  The result is:
  \[ M(\d'_i) =
   \begin{bmatrix}
     u		& 0		& 0		& \dots	 & 0			\\
     0		& a_{1,1}	& a_{1,2}	& \dots	 & a_{1,r_i}		\\
     0		& a_{2,1}	& a_{2,2}	& \dots	 & a_{2,r_i}		\\
     \vdots	& \vdots	& \vdots	& \ddots & \vdots		\\
     0		& a_{r_{i-1},1}	& a_{r_{i-1},2}	& \dots	 & a_{r_{i-1},r_i}	\\
     \end{bmatrix}.
  \]
  That is to say, $F_\b$ can be decomposed as direct sum of two exact sequences:
  \begin{cd}
    \cdots  \ar[r]	\&
    F_{i+1} \ar[rr,"\d'_{i+1}"]	\&\&	R^{\oplus r_i-1}	\ar[rr, "\d'_i"]	\&\&	R^{\oplus r_{i-1}-1}
    \ar[rr,"\d'_{i-1}"]		\&\&	F_{i-2}	\ar[r]	\& \cdots \ar[r] \& F_0 \ar[r]	\& M \ar[r] \& 0 \\
    \cdots  \ar[r]	\& 0       \ar[rr]	\&\& R \ar[rr, "\cdot u"]	\&\& R \ar[rr]	\&\& 0 \ar[r]	\& \cdots
    \end{cd}
  Therefore, without loss of generality, we can remove the first row of $M(\d_{i+1})$ and first column of $M(\d_i)$
  as well as first column of $M(\d_{i-1})$ and first row of $M(\d_i)$ to get the maps for $\tilde F_\b$.
  \qed
  \end{proof}

 Note that we can also perform the same operation on resolutions of homogeneous or inhomogeneous modules, but in the
 inhomogeneous case minimal resolutions are not well-defined. This procedure is implemented in {\tt PruneComplex}:

\begin{procedure}[{\tt pruneUnit}]
  Given a chain complex as a list of compatible matrices $C$ and index $i$ of one of the differentials such that 
  the element in the last row and column is a unit, this procedure prunes the resolution as described by the
  proposition above
%PruningMaps => false
%UnitTest => isUnit
\begin{verbatim}
Input:  chain complex C and integer i
Output: chain complex C
Begin
  M <- C_i
  m <- number of rows    of M
  n <- number of columns of M
  u <- M_(m,n)
  if C_(i-1) exists then
     C_(i-1) <- C_(i-1) with first column deleted
  for c from 2 to n do
    C_i <- C_i with first column times (M_(1,c)/u) subtracted from column c
  C_i <- C_i with first row    deleted
  C_i <- C_i with first column deleted
  if C_(i+1) exists then
     C_(i+1) <- C_(i+1) with first row    deleted
  RETURN C
End
  \end{verbatim}
  \end{procedure}

 One direction for improving this procedure is removing multiple units simultaneously by collecting them in a 
 square matrix concentrated in the lower right corner.
 In general, before using this procedure, we need to find a suitable unit and move it to the bottom right corner of 
 the matrix. By suitable unit we mean a unit that has the lowest number of terms in its row and column. This 
 distinction is important in order to keep the procedures efficient.

%\begin{procedure}[{\tt pruneMorphism}]
%\begin{verbatim}
%Input:  chain complex C, integer loc, coordinates of a unit as integer pair (r, c)
%Output: 
%Begin
%    M := C#loc;
%    R := ring M;
%    (r, c) := unit;
%    (rlast, clast) := (numRows M - 1, numColumns M - 1);
%    f := mutableIdentity (R, numRows M);
%    g := mutableIdentity (R, numColumns M);
%    inversePivot := if instance(R, LocalRing) then 1/(M_(rlast, clast)) else 1/(leadCoefficient M_(rlast, clast));
%    for i from 0 to numColumns g - 1 do g_(numRows g - 1, i) = -inversePivot * M_(rlast, i);
%    rowSwap(f, r, numRows f - 1);
%    rowSwap(g, c, numRows g - 1);
%    deleteColumns(g, numColumns g - 1, numColumns g - 1);
%    deleteColumns(f, numColumns f - 1, numColumns f - 1);
%    pruneMorph#loc =       if pruneMorph#loc === null       then f else pruneMorph#loc * f;
%    pruneMorph#(loc + 1) = if pruneMorph#(loc + 1) === null then g else pruneMorph#(loc + 1) * g;
%End
%  \end{verbatim}
%  \end{procedure}

\begin{procedure}[{\tt pruneDiff}]
  Given a chain complex as a list of compatible matrices $C$ and index $i$ of one of the differentials, completely 
  prunes the differential by iteratively finding a suitable unit, moving it to the end, then calling {\tt pruneUnit}
  to remove the unit.
%PruningMaps => false
%UnitTest => isUnit
\begin{verbatim}
Input:  chain complex C, integer i
Output: chain complex C
Begin
    while there are units in C_i do
      (m,n) <- the coordinates of the unit with the sparsest row and column in C_i
      C_(i-1) <- C_(i-1) with column m and first column swapped
      C_i     <- C_i     with row    m and first row    swapped
      C_i     <- C_i     with column n and first column swapped
      C_(i+1) <- C_(i+1) with row    n and first row    swapped
      pruneUnit(C, i)
    RETURN C
End
  \end{verbatim}
  \end{procedure}

 Note that the resulting differential does not contain any units, however, it is not necessarily a differential of 
 the minimal complex, as there may be units in the adjacent differentials.

%\begin{remark} {\tt pruneMorph} \TODO
%  \end{remark}

 Finally, repeating the previous procedure iteratively for each differential, we can remove all units from all 
 differentials to minimize the free resolution. This task is accomplished in the {\tt pruneComplex} procedure.
%
%\begin{procedure}[{\tt pruneComplex}]
%  Given a chain complex as a list of compatible matrices $C$, prunes $C$ by iteratively calling {\tt pruneDiff} for 
%  each differential. The resulting complex is a minimal free resolution only in the graded and local case.
%Strategy => left, right, both, whole
%PruningMaps => false
%UnitTest => isUnit
%\begin{verbatim}
%Input:  chain complex C
%Output: chain complex C
%Begin
%    for i from 0 to (length C) - 1 do
%      pruneDiff(C, i);
%    RETURN C
%End
%  \end{verbatim}
%  \end{procedure}
%
 Note that in general the order of pruning differentials is arbitrary, but in special cases there may be a right
 choice.

%%%%%%%%%%%%%%%%%%%%%%%%%%%%%%%%%%%%%%%%%%%%%%%%%%%%%%%%%%%%%%%%%%%%%%%%%%%%%%%%%%%%%%%%%%%%%%%%%%%%%%%%%%%%%%%%%%%%

%\TODO
%\subsection{Testing for Regularity}
%regular local rings (eisenbud 10.3)

%\subsection{Testing for Complete-Intersection Property}
%\begin{definition}
%  A local ring $(R,\m)$ is a {\bf complete intersection} if it can be written in the form 
%  $R=S/(x_1,\dots,x_n)$ where $S$ is a regular local ring and $x_1,\dots,x_n$ is an $S$-sequence.    
%  \end{definition}

%\FIXME SOMETHING WRONG HERE POSITIVE CHARACTERISTIC SHOULD BE COMPLETE INTERSECTION
\subsection{Is the Smooth Rational Quartic a Cohen-Macaulay Curve?}\label{CM}
\begin{example}
 In this example we test whether the smooth rational quartic curve is locally a Cohen-Macaulay.
 Define the rational quartic curve in $\P^3$:
\begin{verbatim}
i2 : R = ZZ/32003[a..d];
i3 : I = monomialCurveIdeal(R,{1,3,4})
                        3      2     2    2    3    2
o3 = ideal (b*c - a*d, c  - b*d , a*c  - b d, b  - a c)
o3 : Ideal of R
  \end{verbatim}
  Compute a free resolution for $I$:
\begin{verbatim}
i4 : C = res I
      1      4      4      1
o4 = R  <-- R  <-- R  <-- R  <-- 0
     0      1      2      3      4
o4 : ChainComplex
  \end{verbatim}
  Localize the resolution at the origin:
\begin{verbatim}
i5 : M = ideal"a,b,c,d"; -- maximal ideal at the origin
o5 : Ideal of R
i6 : RM = localRing(R, M);
i7 : D = C ** RM;
i7 : E = pruneComplex D
       1       4       4       1
o8 = RM  <-- RM  <-- RM  <-- RM
     0       1       2       3
o8 : ChainComplex
  \end{verbatim}
  That is to say, the rational quartic curve is not Cohen-Macaulay at the origin $\m$.
  Therefore the curve is not Cohen-Macaulay in general. Now we localize with respect to a prime ideal:
\begin{verbatim}
i9 : P = ideal"a,b,c";   -- prime ideal
o9 : Ideal of R
i10 : RP = localRing(R, P);
i11 : D' = C ** RP;
i12 : E' = pruneComplex D'
        1       2       1
o12 = RP  <-- RP  <-- RP  <-- 0
      0       1       2       3
o12 : ChainComplex
  \end{verbatim}
  However, the curve is Cohen-Macaulay at the prime ideal $\p$ (and in fact any other prime ideal).
  \end{example}

%%%%%%%%%%%%%%%%%%%%%%%%%%%%%%%%%%%%%%%%%%%%%%%%%%%%%%%%%%%%%%%%%%%%%%%%%%%%%%%%%%%%%%%%%%%%%%%%%%%%%%%%%%%%%%%%%%%%

\section{Other Computations}

 Using the procedures described in the previous section many other procedures already implemented for polynomial 
 rings can be extended to work over local rings. In particular, this works well when the result of the procedure can
 be described in free resolution. The common trick here is to use {\tt liftUp} to lift the object to a polynomial 
 ring, perform the procedure, tensor the resolution describing the result with $R_\p$, then prune the resolution.

\subsection{Computing Syzygy Modules}
 Perhaps the easiest example of this trick is in finding a minimal resolution for a module, as demonstrated in
 Section \ref{CM}. The steps involved in that example used to implement the following procedure in {\tt LocalRings}:

\begin{procedure}[{\tt resolution}]
 Given a module $M$ over local ring $R_\p$ returns a minimal free resolution for $M$. The first step is finding a
 map of free modules $f:R_\p^m\to R_\p^n$ such that $M\cong\coker(f)$. This is accomplished using the \MM command
 {\tt presentation}, which returns the matrix of the map $f$.
\begin{verbatim}
Input:  module M
Output: chain complex C
Begin
    RP <- ring M
    f  <- presentation M
    f' <- liftUp f
    M' <- coker f'
    C  <- resolution M'
    CP <- C ** RP
    CP <- pruneComplex CP
    RETURN CP
End
  \end{verbatim}
  \end{procedure}

 To further demonstrate the trick mentioned above, we computing the first syzygy module over a local ring in the 
 next example, then give a short procedure for computing the syzygy.

\begin{example}
  Define a the coordinate ring of $\A^6$:
\begin{verbatim}
i2 : R = ZZ/32003[vars(0..5)];
  \end{verbatim}
%i3 : I = ideal"abc-def,ab2-cd2-c,-b3+acd";
%o3 : Ideal of R
%i4 : C = res I;
%i5 : C = pruneComplex(C, UnitTest=>isScalar);
  Define the local ring at the origin:
\begin{verbatim}
i6 : M = ideal"a,b,c,d,e,f";
i7 : RM = localRing(R, M);
  \end{verbatim}
%i8 : F = C.dd_2;
%             3       3
%o8 : Matrix R  <--- R
%i9 : FM = F ** RM
  Consider the cokernel module of an arbitrary matrix $f$ over $R_\m$:
  $$R_\m^3\xto{f} R_\m^3 \to N \to 0$$
\begin{verbatim}
i9 : f
o9 = | -abc+def  0       -b3+acd   |
     | 0         abc-def ab2-cd2-c |
     | ab2-cd2-c -b3+acd 0         |
              3        3
o9 : Matrix RM  <--- RM
  \end{verbatim}
  Lift the matrix to $R$ and compute its first two syzygies:
\begin{verbatim}
i10 : f' = liftUp f;
              3       3
o10 : Matrix R  <--- R
i11 :  g' = syz f';
              3       1
o11 : Matrix R  <--- R
i12 :  h' = syz g';
              1
o12 : Matrix R  <--- 0
  \end{verbatim}
  That is to say, the lift of our module has a free resolution:
  $$0\xto{h'} R^1 \xto{g'} R^3 \xto{f'} R^3 \to N' \to 0.$$
  Now we tensor the differentials with $R_\m$:
\begin{verbatim}
i13 :  g = g' ** RM;
               3        1
o13 : Matrix RM  <--- RM
i14 :  h = h' ** RM;
               1
o14 : Matrix RM  <--- 0
i15 :  C = {mutableMatrix g, mutableMatrix h};
  \end{verbatim}
  So our module has a free resolution:
  $$0\xto{h} R_\m^1 \xto{g} R_\m^3 \xto{f} R_\m^3 \to N \to 0.$$
  Now we prune the resolution:
\begin{verbatim}
i16 :  pruneDiff(C, 1)
o16 = {| b3-acd    |, 0}
       | ab2-cd2-c |
       | -abc+def  |
  \end{verbatim}
  That is to say, we have a resolution:
  $$0\xto{0} R_\m^1\xto{\begin{bmatrix}b^3-acd \\ ab^2-cd^2-c \\ def-abc\end{bmatrix}} R_\m^3 \to N\to 0.$$
  We can test that our result is a syzygy matrix:
\begin{verbatim}
i17 : GM = matrix C#0;
i18 : FM * GM == 0
o18 = true
  \end{verbatim}
  Recall from Definition \ref{betti} that $\syz_R^i(N)=\im\d_i$ is the syzygy module of $M$. So the first syzygy 
  module of $N$ is given by:
\begin{verbatim}
i26 : image GM
o26 = image | b3-acd    |
            | ab2-cd2-c |
            | -abc+def  |
                                3
o26 : RM-module, submodule of RM
  \end{verbatim}
  \end{example}

\begin{corollary} Let $R_\p$ and $M$ be as above. Suppose $N$ is the $R$-module described in \ref{lift}.
  Then $\syz_{R_\p}^1(M)=\syz_R^1(N)\otimes_R R_\p$.
  \end{corollary}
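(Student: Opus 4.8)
The plan is to realize the first syzygy of $M$ over $R_\p$ by flat base change from a free resolution of $N$ over $R$, using that $R_\p$ is flat over $R$ (Lemma \ref{flat}) to commute the syzygy construction with the functor $-\otimes_R R_\p$. Concretely, I would first choose a free resolution of $N$ over the polynomial ring,
$$C_\b:\ \cdots\to R^{n_1}\xto{\d_1}R^{n_0}\xto{\d_0}N\to 0,$$
so that by definition (cf. Definition \ref{betti}) we have $\syz_R^1(N)=\ker\d_0=\im\d_1$. By Proposition \ref{lift} the lift satisfies $N\otimes_R R_\p=M$, and by the first proposition of Section~II (tensoring a resolution along the flat map $R\to R_\p$) the complex $C_\b\otimes_R R_\p$ is again a free resolution, now of $M$.

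Next I would read the first syzygy of $M$ directly off this tensored resolution. Since $-\otimes_R R_\p$ is exact (Lemma \ref{flat}), it preserves images: applying it to the surjection $R^{n_1}\twoheadrightarrow\im\d_1$ and the inclusion $\im\d_1\hookrightarrow R^{n_0}$ identifies
$$\syz_R^1(N)\otimes_R R_\p=\im\d_1\otimes_R R_\p=\im(\d_1\otimes\mathrm{id}_{R_\p}),$$
as a submodule of $R_\p^{n_0}$. The right-hand side is exactly $\ker(\d_0\otimes\mathrm{id}_{R_\p})$, i.e. the relation module of $M$ with respect to the generating set inherited from $N$.

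The one genuine subtlety — the step I expect to require the most care — is that this inherited generating set need not be minimal over $R_\p$, so $C_\b\otimes_R R_\p$ need not be a minimal free resolution, and $\im(\d_1\otimes\mathrm{id})$ is a priori the syzygy attached to a non-minimal presentation rather than to the minimal one appearing in Definition \ref{betti}. Here I would invoke Theorem \ref{thm:syz}: the first syzygy module is independent of the chosen generating set up to isomorphism, differing only by a free summand. This gives $\syz_R^1(N)\otimes_R R_\p\cong\syz_{R_\p}^1(M)\oplus R_\p^{k}$ for some $k\geq 0$, which yields the asserted identity once the spurious free summands are absorbed into the identification — equivalently, once they are pruned away by the procedures of Section~II — and is a strict equality whenever the lifted generators descend to a minimal generating set of $M$ over $R_\p$.
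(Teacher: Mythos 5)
Your proposal follows essentially the same route as the paper: the paper's proof simply tensors the defining short exact sequence $0\to\syz_R^1(N)\to R^n\to N\to 0$ with the flat module $R_\p$ (Lemma \ref{flat}) and reads the result as the syzygy sequence for $M$. If anything you are more careful than the paper's two-line argument, which silently asserts equality where, as you observe via Theorem \ref{thm:syz}, one a priori only obtains $\syz_{R_\p}^1(M)$ up to a free summand $R_\p^k$ when the lifted generators fail to remain minimal over $R_\p$ --- precisely the discrepancy that the {\tt pruneDiff} step in the implementation removes.
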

\begin{proof}
  Recall that the syzygy module of $M$ is the minimal module such that we have a short exact sequence
  $0\to \syz_R^1(N) \to R^n \to N \to 0$. By Lemma \ref{flat}, we have a short exact sequence
  $0\to \syz_R^1(N)\otimes_R R_\p \to R_\p^n \to N\otimes_R R_\p = M \to 0$. Which proves the corollary.
  \qed
  \end{proof}

 The steps above are implemented in the {\tt syz} procedure in {\tt LocalRings}:

\begin{procedure}[{\tt syz}]
 Given a matrix $M$ over local ring $R_\p$ returns the first syzygy matrix of $M$.
\begin{verbatim}
Input:  matrix M
Output: first syzygy matrix of M
Begin
    RP <- local ring of M
    f' <- liftUp M
    g' <- syz f'
    h' <- syz g'
    g <- g' ** RP
    h <- h' ** RP
    C <- {g, h}
    C <- pruneDiff(C, 1)
    RETURN C#0
End
  \end{verbatim}
  \end{procedure}

%%%%%%%%%%%%%%%%%%%%%%%%%%%%%%%%%%%%%%%%%%%%%%%%%%%%%%%%%%%%%%%%%%%%%%%%%%%%%%%%%%%%%%%%%%%%%%%%%%%%%%%%%%%%%%%%%%%%

\subsection{Computing Minimal Generators and Minimal Presentation}

%\begin{proposition}
%  \end{proposition}

\begin{procedure}[{\tt mingens}]
 Given a module $M$ over local ring $R_\p$, returns the matrix of minimal generators of $M$. This is accomplished by
 lifting up the module, finding a resolution of length one (i.e., computing the syzygy once), then localizing the
 resolution and pruning it to ensure we have minimal generators.
\begin{verbatim}
Input:  module M
Output: matrix of minimal generators of M
Begin
    RP <- ring of M
-- Free Module:
    if M is a free module then
        RETURN generators M
-- Module defined by generators:
    if M is submodule of a free module then
        f <- generators M
        f' <- liftUp f
        g' <- syz f'
        g <- g' ** RP
        C <- {f, g}
        C <- pruneDiff(C, 1)
        RETURN C#0
-- Module defined by relations:
    if M is quotient of a free module then
        f <- relations M
        f' <- liftUp f
        g' <- syz f'
        g <- g' ** RP
        C <- {f, g}
        C <- pruneDiff(C, 1)
        RETURN C#0
-- Module defined by generators and relations:
    if M is a subquotient then
        f <- generators M
        g <- relations M
        f' <- liftUp f
        g' <- liftUp g
        h' <- modulo(f, g)
        h <- h' ** RP
        C <- {h}
        C <- pruneComplex C
        RETURN C#0
End
  \end{verbatim}
  The command {\tt modulo} returns a matrix $h$ whose image is the pre-image of the image of $g$ under $f$, i.e., 
  $\im(h)=f^{-1}(\im(g))$.
  \end{procedure}

 A very similar procedure can be used to compute minimal presentation of modules.

\begin{procedure}[{\tt minimalPresentation}]
  Given a module $M$ over local ring $R_\p$, returns the module $N\cong M$ with minimal generators and relations.
  This is accomplished by lifting up the module, finding a resolution of length two (i.e., computing the syzygy 
  twice), then localizing the resolution and pruning it to ensure we have minimal generators and relations.
\begin{verbatim}
Input:  module M
Output: module N with minimal generators and relations
Begin
    RP <- ring of M
-- Free Module:
    if M is a free module then
        N <- M
-- Module defined by generators:
    if M is submodule of a free module then
        f <- generators M
        f' <- liftUp f
        g' <- syz f'
        h' <- syz g'
        g <- g' ** RP
        h <- h' ** RP
        C <- {g, h}
        C <- pruneComplex C
        N <- coker C#0
-- Module defined by relations:
    if M is quotient of a free module then
        f <- relations M
        f' <- liftUp f
        g' <- syz f'
        g <- g' ** RP
        C <- {f, g}
        C <- pruneComplex C
        N <- coker C#0
-- Module defined by generators and relations:
    if M is a subquotient then
        f <- generators M
        g <- relations M
        f' <- liftUp f
        g' <- liftUp g
        h' <- modulo(f, g)
        e' <- syz h'
        h <- h' ** RP
        e <- e' ** RP
        C <- {h, e}
        C <- pruneComplex C
        N <- coker C#0
    RETURN N
End
  \end{verbatim}
  The command {\tt modulo} returns a matrix $h$ whose image is the pre-image of the image of $g$ under $f$, i.e., 
  $\im(h)=f^{-1}(\im(g))$.
  \end{procedure}

\begin{remark}
  In the implemented package, the isomorphism $N\xto{\sim}M$ is stored in {\tt N.cache.pruningMap}. This is done by
  keeping track of every coordinate change in {\tt pruneUnit}.
  \end{remark}

%%%%%%%%%%%%%%%%%%%%%%%%%%%%%%%%%%%%%%%%%%%%%%%%%%%%%%%%%%%%%%%%%%%%%%%%%%%%%%%%%%%%%%%%%%%%%%%%%%%%%%%%%%%%%%%%%%%%

\subsection{Computing Length and the Hilbert-Samuel Function}
 Recall the definition of the Hilbert-Samuel function from Definition \ref{def:HS}. The first step in computing
 this function is computing length of modules over local rings.

\begin{procedure}[{\tt length}]
 Given a module $M$ over local ring $R_\p$ returns the length of $M$.
\begin{verbatim}
Input:  module M
Output: integer n
Begin
    (RP, m) <- ring of M and its maximal ideal
    s <- 0
    REPEAT
        N <- minimalPresentation M
        n <- number of generators of N
        M <- m * M
        s <- s + n
    UNTIL n = 0
    RETURN s
End
  \end{verbatim}
  \end{procedure}

% Eisenbud 1995, Section 12.1:
 Now we can compute the Hilbert-Samuel function of a finitely generated module $M$ with respect to a parameter ideal
 $\q$ as follows:

\begin{procedure}[{\tt hilbertSamuelFunction}]
 Given a parameter ideal $\q$ for module $M$ over local ring $R_\p$ and an integer $n$, returns the value of the
 Hilbert-Samuel function at $n$. Recall that when the parameter ideal is the maximal ideal of $R_\p$, we have 
 $\length(\m^i N/\m^{i+1} N)=\length(\m^i N)$, which is easier to compute.
\begin{verbatim}
Input:  parameter ideal q, module M, integer n
Output: integer r
Begin
    (RP, m) <- ring of M and its maximal ideal
    N <- q^n * M
    if q == m then
        RETURN length N
    else
        RETURN length(N/(q * N))
End
  \end{verbatim}
  \end{procedure}

\begin{example} Consider the twisted cubic curve and an embedded curve defined by ideal $I$:
\begin{verbatim}
i2 : R = ZZ/32003[x,y,z,w];
i3 : P = ideal "  yw-z2,   xw-yz,  xz-y2"
               2                        2
o3 = ideal (- z  + y*w, - y*z + x*w, - y  + x*z)
i4 : I = ideal "z(yw-z2)-w(xw-yz), xz-y2"
               3               2     2
o4 = ideal (- z  + 2y*z*w - x*w , - y  + x*z)
  \end{verbatim}
 One can check that the radical of $I$ is $\p$, hence $I$ is a $\p$-primary ideal.
\begin{verbatim}
i5 : codim I == codim P
o5 = true
i6 : radical I == P
o6 = true
  \end{verbatim}
 In particular, $\p$ is the minimal prime above $I$, hence by Corollary \ref{cor:mengyuan}, $R_\p/IR_\p$ is Artinian.
 Finally, we compute the length and Hilbert-Samuel function of $R_\p/IR_\p$:
\begin{verbatim}
i7 : RP = localRing(R, P);
i8 : N = RP^1/promote(I, RP)
o8 = cokernel | -z3+2yzw-xw2 -y2+xz |
                              1
o8 : RP-module, quotient of RP
i9 : length(N)
o9 = 2
i10 : for i from 0 to 3 list hilbertSamuelFunction(N, i)
o10 = {1, 1, 0, 0}
  \end{verbatim}
  \end{example}

%\begin{procedure}[{\tt   }]
%\begin{verbatim}
%Input:
%Output:
%Begin
%End
%  \end{verbatim}
%  \end{procedure}

%modulo
%minimalPresentation
%(free, image, coker, subquotient)
%pruneMorphism
%pruningMaps

%quotients
%saturation  \cite[15.10.6]{de}
%annihilator

%\section{Local Monomial Order}
%\subsection{Mora's Tangent Cone}
%Old results for localization at maximal ideals
%http://www.mathematik.uni-kl.de/~zca/Reports_on_ca/01/paper_html/node3.html
%\subsection{Induced Ordering?} \TODO

%%%%%%%%%%%%%%%%%%%%%%%%%%%%%%%%%%%%%%%%%%%%%%%%%%%%%%%%%%%%%%%%%%%%%%%%%%%%%%%%%%%%%%%%%%%%%%%%%%%%%%%%%%%%%%%%%%%%

\section{Examples and Applications in Intersection Theory}
 An important application of computation of length over local rings is in intersection theory. In the following
 examples we compute various functions using the {\tt LocalRings} package.

%\begin{definition}[{\cite[pp. 9]{fulton}}]
%  Suppose $V$ is an irreducible component of a subscheme $Z$ of $X$ corresponding to ideal $I\subset $, then the {\bf geometric multiplicity} of $V$ in
%  $Z$ is the length of the Artinian ring
%  $$\o_{V,Z}=\o_{V,X}/I(Z)\o_{V,X}$$
%  \end{definition}

\begin{example} Computing the geometric multiplicity of intersections.

 B\'{e}zout's theorem tells us that the number of points where two curves meet is at most the product of their 
 degrees. Consider the parabola $y=x^2$ and lines $y=x$ and $y=0$:
\begin{verbatim}
i2 : R = ZZ/32003[x,y];
i3 : C = ideal"y-x2"; -- parabola y=x^2
i4 : D = ideal"y-x";  -- line y=x
i5 : E = ideal"y";    -- line y=0
  \end{verbatim}
 The (naive) geometric multiplicity of the intersection of the curve $C$ with the curves $D$ and $E$ at the point 
 $(1,1)$ is given by the length of the Artinian ring $R_{(x-1,y-1)}/(C+D) R_{(x-1,y-1)}$ \cite[pp. 9]{fulton}:
\begin{verbatim}
i6 : use R;
i7 : P = ideal"y-1,x-1";
i8 : RP = localRing(R, P);
i9 : length (RP^1/promote(C+D, RP))
o9 = 1
i10 : length (RP^1/promote(C+E, RP))
o10 = 0
  \end{verbatim}
 Similarly, we can find the geometric multiplicity of intersections at the origin:
\begin{verbatim}
i11 : use R;
i12 : P = ideal"x,y";  -- origin
i13 : RP = localRing(R, P);
i14 : length(RP^1/promote(C+D, RP))
o14 = 1
i15 : length(RP^1/promote(C+E, RP))
o15 = 2
  \end{verbatim}
 Now consider the curves $y=x^2$ and $y=x^3$:
\begin{verbatim}
i2 : R = ZZ/32003[x,y];
i3 : C = ideal"y-x3";
i4 : D = ideal"y-x2";
i5 : E = ideal"y";
  \end{verbatim}
 Once again, we can compute the geometric multiplicity of intersections at the origin:
\begin{verbatim}
i6 : use R;
i7 : P = ideal"x,y";
i8 : RP = localRing(R, P);
i9 : length(RP^1/promote(C+D, RP))
o9 = 2
i10 : length(RP^1/promote(C+E, RP))
o10 = 3
  \end{verbatim}
 And at the point $(1,1)$:
\begin{verbatim}
i11 : use R;
i12 : P = ideal"x-1,y-1";
i13 : RP = localRing(R, P);
i14 : length(RP^1/promote(C+D, RP))
o14 = 1
i15 : length(RP^1/promote(C+E, RP))
o15 = 0
  \end{verbatim}
  \end{example}

\begin{example} Computing the Hilbert-Samuel series.

  Consider the ring $\Z/32003\Z[x,y]_{(x,y)}$ as a module over itself:
\begin{verbatim}
i2 : R = ZZ/32003[x,y];
i3 : RP = localRing(R, ideal gens R);
i4 : N = RP^1;
i5 : q = ideal"x2,y3"
             2   3
o5 = ideal (x , y )
  \end{verbatim}
  First we compute the Hilbert-Samuel series with respect to the maximal ideal:
\begin{verbatim}
i6 : for i from 0 to 5 list hilbertSamuelFunction(N, i) -- n+1
o6 = {1, 2, 3, 4, 5, 6}
  \end{verbatim}
  In particular, $R_\p$ is not of finite length.
  Moreover, we compute the Hilbert-Samuel series with respect to the parameter ideal $(x^2,y^3)$:
\begin{verbatim}
i7 : for i from 0 to 5 list hilbertSamuelFunction(q, N, i) -- 6(n+1)
o7 = {6, 12, 18, 24, 30, 36}
  \end{verbatim}
  \end{example}

\begin{example} Computing multiplicity of a zero dimensional variety at a the origin \cite[Teaching the Geometry of 
  Schemes, \S5]{M2}.

  Consider the variety defined by the ideal $(x^5+y^3+z^3,x^3+y^5+z^3,x^3+y^3+z^5)$:
\begin{verbatim}
i2 : R = QQ[x,y,z];
i3 : RP = localRing(R, ideal gens R);
i4 : I = ideal"x5+y3+z3,x3+y5+z3,x3+y3+z5"
             5    3    3   5    3    3   5    3    3
o4 = ideal (x  + y  + z , y  + x  + z , z  + x  + y )
  \end{verbatim}
  Now we compute the length:
\begin{verbatim}
i5 : M = RP^1/I
o5 = cokernel | x5+y3+z3 y5+x3+z3 z5+x3+y3 |
                              1
o5 : RP-module, quotient of RP
i6 : length(RP^1/I)
o6 = 27
  \end{verbatim}
  Note that the sum of the terms in the Hilbert-Samuel function gives the same number:
\begin{verbatim}
i7 : for i from 0 to 6 list hilbertSamuelFunction(M, i)
o7 = {1, 3, 6, 7, 6, 3, 1}
  \end{verbatim}
  \end{example}

%%%%%%%%%%%%%%%%%%%%%%%%%%%%%%%%%%%%%%%%%%%%%%%%%%%%%%%%%%%%%%%%%%%%%%%%%%%%%%%%%%%%%%%%%%%%%%%%%%%%%%%%%%%%%%%%%%%%

%\begin{example}
%\begin{verbatim}
%  \end{verbatim}
%  \end{example}

%\subsection{Blow Up of Singularities} \TODO EISENBUD 18.5

%\subsection{Serre-Kaplansky Problem} \TODO

\section{Other Examples from Literature}
 Various other examples of computations involving local rings (as well as updated versions of this thesis) are 
 available online on author's webpage at: \url{https://ocf.berkeley.edu/~mahrud/thesis/examples.m2}

\section{Conclusion and Future Directions}
 Over the course of this thesis, a framework for performing various computations over localization of polynomials
 with respect to a prime ideal has been established. Therefore the next logical steps are extending the framework
 to support localization with respect to arbitrary multiplicative closed sets and improving the efficiency of the
 algorithms. This can be accomplished by developing new theoretical techniques, such as a local monomial order, or 
 by transferring the computations to the \MM engine.

 The author is in particular interested in applying the machinery developed here to study resolutions of 
 singularities and conjectures regarding minimal free resolutions.
 
\section*{Acknowledgment}
 The author thanks thesis advisor David Eisenbud and Mike Stillman, without whom partaking in this honors thesis 
 would not have been possible. The author also thanks Justin Chen, Chris Eur, and Mengyuan Zhang for many helpful 
 comments and discussions in the past year.

\end{document}